\title{Sections and Chapters}
\title{\bf {\textup{BSE}}- property of tensor product Banach algebras}
\author{Maryam Aghakoochaki  $^1$, \thanks{2020 Mathematics Subject Classifcation. Primary: 46J05; Secondary: 46J10} ,  \and Ali Rejali $^2$, \thanks{Corresponding author}}
\date{
	$^1$Isfahan University  \\ \texttt{mkoochaki@sci.ui.ac.ir, Orcid: 0000-0002-3851-6550}\\%
	$^2$Department of Pure Mathematics, Faculty of Mathematics and Statistics, University of Isfahan, Isfahan 81746-73441, Iran \\ \texttt{rejali@sci.ui.ac.ir, Orcid: 0000-0001-7270-665X}\\[2ex]%
\today
}
\newcommand{\tnorm}[1]{{\left\vert\kern-0.25ex\left\vert\kern-0.25ex\left\vert #1 
    \right\vert\kern-0.25ex\right\vert\kern-0.25ex\right\vert}}
\theoremstyle{plain}
\newtheorem{thm}{Theorem}[section]
\newtheorem{pro}[thm]{Proposition}
\newtheorem{cor}[thm]{Corollary}
\newtheorem{lem}[thm]{Lemma}
\theoremstyle{definition}
\newtheorem{ex}[thm]{\bf Example}
\newtheorem{DEF}[thm]{\bf Definition}
\begin{document}
\maketitle

\begin{abstract}
		Let $A$ and $B$ be commutative semisimple Banach algebras. In this paper,   the  $\textup{BSE}$ and $\textup{BED}$- property of  tensor  Banach algebra $A{\otimes}_{\gamma} B$
with respect to the Banach algebras $A$ and $B$ are assesed. In particular, $\textup{BSE}$ and $\textup{BED}$-  structure of vector-valued group algebra $L_{1}(G, A)$ and  $C^{*}$- algebra $C_{0}(X, A)$ characterized.

\noindent\textbf{Keywords:} Banach algebra,  $\textup{BSE}$-  algebra,   $\textup{BED}$- algebra, tensor product.
\end{abstract}

\section{Introduction}
In this paper, $X$ is a metric space with at least two elements and $(A,\|.\|)$ is a commutative semisimple Banach algebra over the scalar field $\mathbb C$. 
Let  $\Delta(A)$ be the character space of $ A$ with the Gelfand topology. $\Delta(A)$ is the set consisting of all non-zero multiplicative
linear functionals on $A$.
   Assume that $C_{b}(\Delta(A))$ is the space consisting of all complex-valued continuous and bounded functions
on $\Delta(A)$, with sup-norm $\|.\|_{\infty}$. 
A continuous linear operator $T$ on $A$ is named a multiplier if for all $x,y\in A$, $T(xy)=xT(y)$; See  \cite{kan}.
The set of all multipliers on $A$ will be expressed as $M(A)$. It is obvious that $M(A)$ is a Banach algebra, and if $A$ is an unital Banach algebra, then 
$M(A)\cong A$.  As observed in   \cite{klar},  for each $T\in M(A)$ there exists a unique bounded  continuous function $\widehat{T}$ on $\Delta(A)$ expressed as:
$$\varphi(Tx)=\widehat{T}(\varphi)\varphi(x),$$
for all $x\in A$ and $\varphi\in \Delta(A)$.
By setting $\{\widehat{T}: T\in M(A)\}$, the $\widehat{M(A)}$ is yield.

      If the Banach algebra $A$ is  semisimple, then the Gelfand map $\Gamma_{A}: A\to\widehat A$, $f\mapsto \hat f$, is injective. Define 
$${\cal {M}}(A) :=\{ \sigma\in C_{b}(\Delta(A)): ~ \sigma.\hat{A}\subseteq \hat A\}.$$
 Then $\widehat{M(A)}= \cal{M}(A)$, whenever $A$ is a semisimple Banach algebra.
A bounded complex-valued continuous function $\sigma$ on $\Delta(A)$  is named a  BSE function, if there exists a positive real number $\beta$ in a sense that for every finite complex-number $c_{1},\cdots,c_{n}$,  and  the same many $\varphi_{1},\cdots,\varphi_{n}$ in $\Delta(A)$ the following inequality
$$\mid\sum_{j=1}^{n}c_{j}\sigma(\varphi_{j})\mid\leq \beta\|\sum_{j=1}^{n}c_{j}\varphi_{j}\|_{A^{*}}$$
holds.\\

The set of all BSE-functions is expressed by $C_{\textup{BSE}}(\Delta( A))$, where for 
 each $\sigma$, the BSE-norm of $\sigma$, $\|\sigma\|_{\textup{BSE}}$ 
is the infimum of all  $\beta$s  applied in the above inequality.   That $(C_{\textup{BSE}}(\Delta(A)), \|.\|_{\textup{BSE}})$ is a semisimple Banach subalgebra of $C_{b}(\Delta(A))$ is in Lemma 1 proved in \cite{E6}. Algebra $A$ is named a BSE algebra  if it meets the following condition:
$$\widehat{M(A)}\cong C_{\textup{BSE}}(\Delta(A)).$$ 
If $A$ is unital, then $\widehat{M(A)}\cong \widehat{A}\mid_{\Delta(A)}$, indicating that $A$ is a  BSE algebra if and only if  $C_{\textup{BSE}}(\Delta(A))\cong \widehat{A}\mid_{\Delta(A)}$.
If there exists some  $K>0$ in a sense that for each $a\in A$, the following holds: 
$$
\|a\|_{A}\leq K\|\hat{a}\|_{\textup{BSE}}
$$
then the semisimple Banach algebra $A$ is named a norm-BSE algebra.

  The  Bochner-Schoenberg-Eberlein (BSE) is derived from the famous theorem proved in 1980 by Bochner and Schoenberg for the group of real numbers; \cite{SB} and \cite{P1}. Later, Eberlein in \cite{N2},  revealed that if $G$ is any locally compact abelian group, then the group algebra $L_{1}(G)$ is a BSE algebra.
 The researchers in \cite{P1},\cite{E7},\cite{E8} assessed the commutative Banach algebras that meet the Bochner-Schoenberg-Eberlein- type theorem and explained their properties.

  In 2007, Inoue and Takahasi in \cite{F1}  introduced and assessed the concept of $\textup{BED}$- concerning Doss, where Fourier-Stieltjes transforms of absolutely continuous measures are specified.
The acronym  $\textup{BED}$  stands for Bochner-Eberlein-Doss and refers to a famous
theorem proved in \cite{RD1} and \cite{RD2}. They proved that if $G$ is a locally compact Abelian
group, then the group algebra $L_{1}(G)$  is a $\textup{BED}$- algebra. Later \cite{AP}, the researcher showed that $l^{p}(X, A)$ is a $\textup{BED}$- algebra if and only if $A$ is so.
The authors in \cite{MAR} and \cite{MAR2}, investigated and assess the correlation between different types of $\textup{BSE}$- Banach algebras $A$, and the Banach
algebras $C_{0}(X,A)$ and $L_{1}(G,A)$.

The basic terminologies and the related information on  $\textup{BED}$- algebras are
extracted from \cite{F1}, \cite{kan}, and \cite{klar}.

The function $\sigma\in C_{BSE}(\Delta(A))$ is  a ${\textup{BED}}$- function, if for all $\epsilon>0$, there exists some compact set such $K\subseteq \Delta(A)$, where  for all $c_{i}\in\mathbb C$ and for all $\varphi_{i}\in\Delta(A)\backslash K$ the following inequality:
$$
|\sum_{i=1}^{n} c_{i}\sigma(\varphi_{i}) |\leq \epsilon\|\sum_{i=1}^{n}c_{i}\varphi_{i}\|_{A^{*}}
$$
holds. This definition of $\textup{BED}$-  functions is a modification of the definition in \cite{RD1}.
  The set of all $\textup{BED}$-  functions is expressed by the $C_{BSE}^{0}(\Delta(A))$.  Clearly, $C_{BSE}^{0}(\Delta(A))$ is a closed ideal of $ C_{BSE}(\Delta(A))$; See[\cite{F1}, Corollary 3.9].

 We will prove that:\\
If $A$ is a commutative semisimple unital dual Banach algebra and $G$ is a locally compact group. Then \\
(i)
$$
C_{\textup{BSE}}(\Delta(L^{1}(G, A))= \widehat{M(G)}\overset{\textup{BSE}}{\otimes}C_{\textup{BSE}}(\Delta(A))
$$
(ii)
$$
C_{\textup{BSE}}^{0}(\Delta(L^{1}(G, A))= \widehat{L^{1}(G)}\overset{\textup{BSE}}{\otimes}C_{\textup{BSE}}^{0}(\Delta(A))
$$
(iii)
$$
C_{\textup{BSE}}(\Delta(C_{0}(X, A)))= \widehat{C_{b}(X)}\overset{\textup{BSE}}{\otimes}C_{\textup{BSE}}(\Delta(A))
$$
(iv)
$$
C_{\textup{BSE}}^{0}(\Delta(C_{0}(X, A)))= \widehat{C_{0}(X)}\overset{\textup{BSE}}{\otimes}C_{\textup{BSE}}(\Delta(A))
$$

In general, for each commutative  semisimple Banach algebras $A$ and $B$, we show that:\\
(i) 
$$
C_{\textup{BSE}}(\Delta(A{\otimes}_{\gamma}B)) = C_{\textup{BSE}}(\Delta(A)) \overset{\textup{BSE}}{\otimes} C_{\textup{BSE}}(\Delta(B)).
$$
(ii)
$$
C_{\textup{BSE}}^{0}(\Delta(A{\otimes}_{\gamma}B)) = C_{\textup{BSE}}^{0}(\Delta(A)) \overset{\textup{BSE}}{\otimes} C_{\textup{BSE}}^{0}(\Delta(B)).
$$
\section{ ${\textup{BSE}}$- properties of Banach algebras}
In this section, some of the basic terminologies of ${\textup{BSE}}$-  algebra
are reviewed and proven. Let $A$ be a commutative semisimple  Banach algebra and $X$ be a locally compact Hausdorff space.
\begin{lem}\label{lsi}
If $\sigma\in C_{\textup{BSE}}(\Delta(A))$, $c_{i}\in\mathbb C$ and $\varphi_{i}\in\Delta(A)$ for all $1\leq i\leq n$, then
$$
\|\sum_{i=1}^{n} c_{i}\sigma(\varphi_{i})\varphi_{i}\|_{A^{*}}\leq\|\sigma\|_{BSE}.\|\sum_{i=1}^{n}c_{i}\varphi_{i}\|_{A^{*}}.
$$
\end{lem}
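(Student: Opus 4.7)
The plan is to compute the $A^{*}$-norm on the left-hand side directly from its definition as a supremum over the unit ball of $A$, and then apply the defining BSE inequality for $\sigma$ with a judicious choice of coefficients. Concretely, I would start by writing
$$
\Bigl\|\sum_{i=1}^{n} c_{i}\sigma(\varphi_{i})\varphi_{i}\Bigr\|_{A^{*}} \;=\; \sup_{\|a\|\le 1}\Bigl|\sum_{i=1}^{n} c_{i}\sigma(\varphi_{i})\varphi_{i}(a)\Bigr|,
$$
and for a fixed $a\in A$ with $\|a\|\le 1$, set $d_{i}:=c_{i}\varphi_{i}(a)$.

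Next I would feed the scalars $d_{1},\dots,d_{n}$ into the defining BSE inequality for $\sigma$, which yields
$$
\Bigl|\sum_{i=1}^{n} d_{i}\sigma(\varphi_{i})\Bigr| \;\le\; \|\sigma\|_{\textup{BSE}}\,\Bigl\|\sum_{i=1}^{n} d_{i}\varphi_{i}\Bigr\|_{A^{*}}.
$$
So the remaining task is to bound $\bigl\|\sum_{i} c_{i}\varphi_{i}(a)\varphi_{i}\bigr\|_{A^{*}}$ by $\bigl\|\sum_{i}c_{i}\varphi_{i}\bigr\|_{A^{*}}$. This is where the multiplicativity of the characters $\varphi_{i}$ comes in: for any $b\in A$ with $\|b\|\le 1$, one has $\varphi_{i}(a)\varphi_{i}(b)=\varphi_{i}(ab)$, hence
$$
\Bigl|\sum_{i=1}^{n} c_{i}\varphi_{i}(a)\varphi_{i}(b)\Bigr| \;=\; \Bigl|\sum_{i=1}^{n} c_{i}\varphi_{i}(ab)\Bigr| \;\le\; \Bigl\|\sum_{i=1}^{n} c_{i}\varphi_{i}\Bigr\|_{A^{*}}\|ab\| \;\le\; \Bigl\|\sum_{i=1}^{n} c_{i}\varphi_{i}\Bigr\|_{A^{*}}.
$$
Taking the supremum over $\|b\|\le 1$ gives the required bound.

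Combining the two estimates yields the pointwise bound on $a$, and taking the supremum over $\|a\|\le 1$ produces the asserted inequality. There is no real obstacle here; the only subtlety is noticing that one should package $c_{i}\varphi_{i}(a)$ as the new coefficients in the BSE inequality and then exploit multiplicativity of the $\varphi_{i}$ to absorb the factor $\varphi_{i}(a)$ back into the $A^{*}$-norm of $\sum c_{i}\varphi_{i}$ via submultiplicativity of $\|\cdot\|$.
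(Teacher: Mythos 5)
Your proof is correct, and it takes a different route from the paper's. The paper also starts by writing the left-hand side as $\sup_{\|a\|_A\le 1}\bigl|\sum_i c_i\sigma(\varphi_i)\varphi_i(a)\bigr|$, but it then absorbs the factor $\varphi_i(a)$ into the BSE function rather than into the coefficients: it rewrites the sum as $\sum_i c_i(\sigma\cdot\hat a)(\varphi_i)$, applies the BSE inequality to the product $\sigma\cdot\hat a$, and concludes via $\|\sigma\cdot\hat a\|_{\textup{BSE}}\le\|\sigma\|_{\textup{BSE}}\|\hat a\|_{\textup{BSE}}\le\|\sigma\|_{\textup{BSE}}$, which invokes the facts that $\hat A\subseteq C_{\textup{BSE}}(\Delta(A))$, that $C_{\textup{BSE}}(\Delta(A))$ is a Banach algebra with submultiplicative norm (Takahasi--Hatori), and that $\|\hat a\|_{\textup{BSE}}\le\|a\|_A$. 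You instead keep $\sigma$ fixed, feed the modified coefficients $d_i=c_i\varphi_i(a)$ into the defining inequality for $\sigma$, and then prove directly that $\bigl\|\sum_i c_i\varphi_i(a)\varphi_i\bigr\|_{A^*}\le\bigl\|\sum_i c_i\varphi_i\bigr\|_{A^*}$ using multiplicativity of the characters and submultiplicativity of the algebra norm. The two arguments bound the same quantity, but yours is more self-contained: it uses only the definition of $\|\cdot\|_{\textup{BSE}}$ (the passage from the infimum defining $\|\sigma\|_{\textup{BSE}}$ to the inequality with $\beta=\|\sigma\|_{\textup{BSE}}$ is the standard $\epsilon$-argument and is fine) and needs neither the algebra structure of $C_{\textup{BSE}}(\Delta(A))$ nor semisimplicity of $A$, so it in fact works for any commutative Banach algebra with nonempty character space; the paper's version is shorter if one takes the Banach-algebra properties of $C_{\textup{BSE}}(\Delta(A))$ as given.
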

\begin{proof}
\begin{align*}
\|\sum_{i=1}^{n}c_{i}\sigma(\varphi_{i})\varphi_{i}\|_{A^{*}} &= \textup {sup}\{ \sum_{i=1}^{n}c_{i}\sigma(\varphi_{i})\varphi_{i}(a): ~ \|a\|_{A}\leq 1\}\\
                                                                                                           &= \textup {sup}\{ \sum_{i=1}^{n}c_{i}(\sigma. \hat{a})(\varphi_{i}): ~ \| a\|_{A}\leq 1\}
\end{align*}
Since $\hat{A}\subseteq C_{\textup{BSE}}(\Delta(A))$, so $\sigma.\hat{a}\in C_{\textup{BSE}}(\Delta(A))$. Thus the following is the yield:
\begin{align*}
\|\sum_{i=1}^{n}c_{i}\sigma(\varphi_{i})\varphi_{i}\|_{A^{*}} &\leq \textup {sup}\{\|\sigma.\hat{a}\|_{\textup{BSE}}.\|\sum_{i=1}^{n}c_{i}\varphi_{i}\|_{A^{*}}:~ \|a\|_{A}\leq 1\}\\
                                                                                                       &\leq \textup {sup}\{\|\sigma\|_{\textup{BSE}}.\|\hat{a}\|_{\textup{BSE}}.\|\sum_{i=1}^{n}c_{i}\varphi_{i}\|_{A^{*}}:~ \|a\|_{A}\leq 1\}\\
                                                                                                       &\leq \|\sigma\|_{\textup{BSE}}. \|\sum_{i=1}^{n}c_{i}\varphi_{i}\|_{A^{*}}
\end{align*}
This implies that
$$
\|\sum_{i=1}^{n}c_{i}\sigma(\varphi_{i})\varphi_{i}\|_{A^{*}}\leq  \|\sigma\|_{\textup{BSE}}. \|\sum_{i=1}^{n}c_{i}\varphi_{i}\|_{A^{*}}.
$$
\end{proof}
In the following, we give a simple proof of the result in \cite{F1}, with another definition.
\begin{pro}\label{pcid}
 Let $A$ be a commutative semisimple  Banach algebra. Then $C_{\textup{BSE}}^{0}(\Delta(A))$ is a closed ideal of $ C_{\textup{BSE}}(\Delta(A))$.
\end{pro}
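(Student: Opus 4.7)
The plan is to break the proof into the three natural subproperties: linearity, the ideal property under BSE-multiplication, and closedness in the BSE-norm. Linearity of $C_{\textup{BSE}}^{0}(\Delta(A))$ is essentially free from the definition, since summing two inequalities of the form $|\sum c_i \sigma(\varphi_i)|\le\epsilon\|\sum c_i\varphi_i\|_{A^*}$ off suitable compact sets $K_1,K_2$ gives the same bound off $K_1\cup K_2$.

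For the ideal property, I would fix $\sigma\in C_{\textup{BSE}}^{0}(\Delta(A))$ and $\tau\in C_{\textup{BSE}}(\Delta(A))$, and show $\sigma\tau\in C_{\textup{BSE}}^{0}(\Delta(A))$. The key idea is to absorb the factor $\tau(\varphi_i)$ into the coefficients. Given $\epsilon>0$, apply the BED-property of $\sigma$ with tolerance $\epsilon/\|\tau\|_{\textup{BSE}}$ to obtain a compact set $K\subseteq\Delta(A)$; then for any $c_i\in\mathbb{C}$ and $\varphi_i\in\Delta(A)\setminus K$, rewrite
\[
\Bigl|\sum_{i=1}^{n}c_i(\sigma\tau)(\varphi_i)\Bigr|=\Bigl|\sum_{i=1}^{n}d_i\sigma(\varphi_i)\Bigr|\quad\text{with }d_i:=c_i\tau(\varphi_i),
\]
and apply the BED-inequality for $\sigma$ followed by Lemma~\ref{lsi} to estimate $\|\sum d_i\varphi_i\|_{A^*}=\|\sum c_i\tau(\varphi_i)\varphi_i\|_{A^*}\le\|\tau\|_{\textup{BSE}}\|\sum c_i\varphi_i\|_{A^*}$. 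Combining gives the desired bound $\epsilon\|\sum c_i\varphi_i\|_{A^*}$. So Lemma~\ref{lsi} is the main technical input here.

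For closedness, I would take a sequence $\sigma_n\in C_{\textup{BSE}}^{0}(\Delta(A))$ converging to $\sigma$ in the BSE-norm. Given $\epsilon>0$, pick $N$ with $\|\sigma-\sigma_N\|_{\textup{BSE}}<\epsilon/2$, then choose a compact set $K$ witnessing the BED-property of $\sigma_N$ with tolerance $\epsilon/2$. For $\varphi_i\in\Delta(A)\setminus K$, split
\[
\Bigl|\sum c_i\sigma(\varphi_i)\Bigr|\le\Bigl|\sum c_i(\sigma-\sigma_N)(\varphi_i)\Bigr|+\Bigl|\sum c_i\sigma_N(\varphi_i)\Bigr|,
\]
bound the first term by the BSE-inequality applied to $\sigma-\sigma_N$ and the second by the choice of $K$, to obtain the total estimate $\epsilon\|\sum c_i\varphi_i\|_{A^*}$.

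I do not expect any serious obstacle: the only non-trivial ingredient is Lemma~\ref{lsi}, which is precisely what lets the ideal proof go through with a clean transfer of the tolerance from $\sigma\tau$ back to $\sigma$. The rest is a matter of carefully tracking constants and compact sets.
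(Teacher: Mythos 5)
Your proposal is correct and follows essentially the same route as the paper's proof: the ideal property is obtained by absorbing the values of the BSE-factor into the coefficients and invoking Lemma~\ref{lsi}, and closedness by the $\epsilon/2$-splitting through a nearby $\sigma_N$ and the BSE-norm estimate on $\sigma-\sigma_N$. The only (harmless) cosmetic difference is that you pre-scale the tolerance by $\|\tau\|_{\textup{BSE}}$ to land exactly at $\epsilon$, whereas the paper keeps the factor $\|\sigma\|_{\textup{BSE}}$ in the final bound.
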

\begin{proof}
Assume that $\sigma\in C_{\textup{BSE}}(\Delta(A))$ and  $\sigma_{0}\in C_{\textup{BSE}}^{0}(\Delta(A))$. So for all $\epsilon>0$ there exists some compact set $K\subseteq \Delta(A)$ where for all $c_{i}\in\mathbb C$
and $\varphi_{i}\in\Delta(A)\backslash K$ the following inequality is holds:
$$
|\sum_{i=1}^{n}c_{i}\sigma_{0}(\varphi_{i})|\leq \epsilon\|\sum_{i=1}^{n}c_{i}\varphi_{i}\|_{A^{*}}
$$
Then
\begin{align*}
|\sum_{i=1}^{n}c_{i}\sigma.\sigma_{0}(\varphi_{i})| &= |\sum_{i=1}^{n}c_{i}\sigma(\varphi_{i})\sigma_{0}(\varphi_{i})|\\
                                                                                        & \leq \epsilon\|\sum_{i=1}^{n}c_{i}\sigma(\varphi_{i})\varphi_{i}\|_{A^{*}}
\end{align*}
Furthermore, according to Lemma \ref{lsi} we have 
$$
|\sum_{i=1}^{n}c_{i}\sigma.\sigma_{0}(\varphi_{i})|\leq  \epsilon\|\sigma\|_{\textup{BSE}}. \|\sum_{i=1}^{n}c_{i}\varphi_{i}\|_{A^{*}}.
$$
Therfore $\sigma\sigma_{0}\in C_{\textup{BSE}}^{0}(\Delta(A))$, so $C_{\textup{BSE}}^{0}(\Delta(A))$ is an  ideal of $ C_{\textup{BSE}}(\Delta(A))$. In the sequel, let $(\sigma_{n})$ be a Cauchy sequence in $C_{\textup{BSE}}^{0}(\Delta(A))$, where $\sigma_{n}\to \sigma$ for some $\sigma\in C_{\textup{BSE}}(\Delta(A))$.
Then  for all $\epsilon>0$ there exists some  number $N$ where for all $n\geq N$ such that 
$$\|\sigma_{n}-\sigma\|_{BSE}\leq\frac{\epsilon}{2}.$$
 Since 
$$\sigma_{N}\in C_{\textup{BSE}}^{0}(\Delta(A))$$
So 
for all $\epsilon>0$ there exists some compact set $K\subseteq \Delta(A)$ where for all $c_{i}\in\mathbb C$
and $\varphi_{i}\in\Delta(A)\backslash K$, the following inequality is holds:
$$
|\sum_{i=1}^{n}c_{i}\sigma_{N}(\varphi_{i})|\leq \frac{\epsilon}{2}\|\sum_{i=1}^{n}c_{i}\varphi_{i}\|_{A^{*}}
$$
This implies that
\begin{align*}
|\sum_{i=1}^{n}c_{i}\sigma(\varphi_{i})| &\leq |\sum_{i=1}^{n}c_{i}\sigma(\varphi_{i})- \sum_{i=1}^{n}c_{i}\sigma_{N}(\varphi_{i})|+|\sum_{i=1}^{n}c_{i}\sigma_{N}(\varphi_{i})|\\
                                                                    &\leq \|\sigma-\sigma_{N}\|_{BSE}.\|\sum_{i=1}^{n}c_{i}\varphi_{i}\|_{A^{*}}+\frac{\epsilon}{2}\|\sum_{i=1}^{n}c_{i}\varphi_{i}\|_{A^{*}}\\
                                                                     &\leq\epsilon\|\sum_{i=1}^{n}c_{i}\varphi_{i}\|_{A^{*}}
\end{align*}
Therefore $\sigma\in C_{\textup{BSE}}^{0}(\Delta(A))$ and so $C_{\textup{BSE}}^{0}(\Delta(A))$ is a closed ideal of $ C_{\textup{BSE}}(\Delta(A))$.
\end{proof}
\begin{thm}\label{tbouc0}
 Let $A$ be a  commutative  Banach algebra. If $A$ has a bounded approximate identity in $A_{0}$, then $\widehat{A}\subseteq C_{\textup{BSE}}^{0}(\Delta(A))$.
\end{thm}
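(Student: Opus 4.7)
The plan is to show $\hat{a} \in C_{\textup{BSE}}^{0}(\Delta(A))$ for each $a \in A$ by an approximation-and-closure argument rather than by verifying the BED inequality directly. I would produce a net in $C_{\textup{BSE}}^{0}(\Delta(A))$ converging to $\hat a$ in BSE-norm, and then quote the closedness of $C_{\textup{BSE}}^{0}(\Delta(A))$ in $C_{\textup{BSE}}(\Delta(A))$.

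First I would fix $a \in A$ and use the bounded approximate identity $(e_{\alpha}) \subseteq A_{0}$ to obtain $\|a - a e_{\alpha}\|_{A} \to 0$. The workhorse of the argument is the factorisation $\widehat{a e_{\alpha}} = \hat{a}\cdot\hat{e_{\alpha}}$. Before combining the two factors, I would record the elementary bound $\|\hat{b}\|_{\textup{BSE}} \leq \|b\|_{A}$ for every $b \in A$, which comes directly from
$$
\Bigl|\sum_{i=1}^{n}c_{i}\hat{b}(\varphi_{i})\Bigr| = \Bigl|\Bigl(\sum_{i=1}^{n}c_{i}\varphi_{i}\Bigr)(b)\Bigr| \leq \|b\|_{A}\,\Bigl\|\sum_{i=1}^{n}c_{i}\varphi_{i}\Bigr\|_{A^{*}}.
$$
In particular $\hat{a} \in C_{\textup{BSE}}(\Delta(A))$, and the hypothesis $e_{\alpha} \in A_{0}$ places $\hat{e_{\alpha}}$ in $C_{\textup{BSE}}^{0}(\Delta(A))$. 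Since $C_{\textup{BSE}}^{0}(\Delta(A))$ is an ideal of $C_{\textup{BSE}}(\Delta(A))$ by Proposition~\ref{pcid}, it then follows that $\widehat{a e_{\alpha}} \in C_{\textup{BSE}}^{0}(\Delta(A))$ for every $\alpha$.

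To close the argument, the same elementary bound applied to $b = a - a e_{\alpha}$ yields
$$
\|\hat{a} - \widehat{a e_{\alpha}}\|_{\textup{BSE}} \leq \|a - a e_{\alpha}\|_{A} \to 0,
$$
so $\hat{a}$ is a BSE-norm limit of elements of $C_{\textup{BSE}}^{0}(\Delta(A))$. Invoking the closedness part of Proposition~\ref{pcid}, $\hat{a} \in C_{\textup{BSE}}^{0}(\Delta(A))$, which gives the inclusion $\hat{A} \subseteq C_{\textup{BSE}}^{0}(\Delta(A))$.

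The main obstacle, and the step where the hypothesis genuinely matters, is the identification $\hat{e_{\alpha}} \in C_{\textup{BSE}}^{0}(\Delta(A))$ from $e_{\alpha} \in A_{0}$. Once $A_{0}$ is read as the preimage of $C_{\textup{BSE}}^{0}(\Delta(A))$ under the Gelfand map (which is the natural convention in this setting), this is automatic; under any weaker reading it would require a separate lemma, but the rest of the argument—built on Proposition~\ref{pcid} and the norm inequality $\|\hat{b}\|_{\textup{BSE}} \leq \|b\|_{A}$—would remain unchanged.
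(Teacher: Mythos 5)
Your argument is correct, but it takes a genuinely different route from the paper's. The paper verifies the BED inequality for $\hat a$ directly: given $\epsilon>0$ it chooses $r_0$ with $\|a-e_{r_0}a\|_A<\epsilon$, takes $K=\operatorname{supp}(\widehat{e_{r_0}})$ (compact because $e_{r_0}\in A_0$), and for $\varphi_i\notin K$ writes $\sum_i c_i\hat a(\varphi_i)=\sum_i c_i\,\widehat{(a-e_{r_0}a)}(\varphi_i)$, which is bounded by $\epsilon\|\sum_i c_i\varphi_i\|_{A^*}$; this is self-contained and uses nothing beyond $\|\hat b\|_{\textup{BSE}}\le\|b\|_A$. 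You instead argue by approximation and closure: $\widehat{ae_\alpha}\in C_{\textup{BSE}}^{0}(\Delta(A))$ via the ideal property, BSE-norm convergence $\widehat{ae_\alpha}\to\hat a$, and then the closedness of $C_{\textup{BSE}}^{0}(\Delta(A))$ from Proposition~\ref{pcid}. Your route is more modular (it reuses the ideal/closedness machinery) but imports Proposition~\ref{pcid}, which is stated for semisimple algebras while the theorem assumes only commutativity; this is harmless, since the parts of Lemma~\ref{lsi} and Proposition~\ref{pcid} you invoke never actually use semisimplicity, but it is worth saying. Note also that neither proof needs boundedness of the approximate identity.

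One correction on the step you flag as the main obstacle: in this paper $A_0$ is \emph{not} the Gelfand preimage of $C_{\textup{BSE}}^{0}(\Delta(A))$; it is defined (in the Tauberian definition) as $\{a\in A:\ \operatorname{supp}(\hat a)\ \text{is compact}\}$. Under that definition the fact you need is still immediate: with $K=\operatorname{supp}(\widehat{e_\alpha})$ one has $\widehat{e_\alpha}(\varphi_i)=0$ for all $\varphi_i\in\Delta(A)\setminus K$, so $|\sum_i c_i\widehat{e_\alpha}(\varphi_i)|=0\le\epsilon\|\sum_i c_i\varphi_i\|_{A^*}$, i.e. $\widehat{A_0}\subseteq C_{\textup{BSE}}^{0}(\Delta(A))$. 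Inserting this one-line lemma closes your argument; in fact it also lets you bypass the ideal property altogether, since $\operatorname{supp}(\widehat{ae_\alpha})\subseteq\operatorname{supp}(\widehat{e_\alpha})$ shows $ae_\alpha\in A_0$ directly.
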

\begin{proof}
Assume that $(e_{r})$ is a bounded approximate identity for $A$ in $A_{0}$ and $a\in A$. Then for all $\epsilon>0$ there exist $r_{0}$ such that $\|a- e_{r_{0}}a\|_{A}<\epsilon$. 
If $P= \sum_{i=1}^{n}c_{i}p_{i}$ where $p_{i}\notin supp (\widehat{e_{r_{0}}})$, then 
\begin{align*}
|\hat {a}(P)| &= |\hat {a}- {\widehat{e_{r_{0}}}}(P){\hat a}(P)|\\
                     &= |(a- e_{r_{0}}a\widehat)(P)|\\
                      &\leq \|P\|.\|(a- e_{r_{0}}a\widehat)\|_{A}\\
                        &\leq \|P\|.\|a- e_{r_{0}}a\|_{A}<\epsilon\|P\|
\end{align*}
Thus ${\hat a}\in C_{\textup{BSE}}^{0}(\Delta(A))$.
\end{proof}
\begin{cor}
Let $G$ be a locally compact group. Then 
$$
C_{0}(G\widehat)\subseteq C_{\textup{BSE}}^{0}(\Delta(C_{0}(G)))
$$
and 
$$
L^{1}(G\widehat)\subseteq C_{\textup{BSE}}^{0}(\Delta(L^{1}(G))).
$$
\end{cor}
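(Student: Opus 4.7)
The plan is to apply Theorem~\ref{tbouc0} directly to the two algebras $A = C_0(G)$ and $A = L^1(G)$, the latter requiring $G$ to be abelian so that $L^1(G)$ is commutative, as is the blanket assumption of the paper. In each case two things must be verified: that $A$ is a commutative semisimple Banach algebra (which is standard in both cases), and that $A$ admits a bounded approximate identity lying inside the subalgebra $A_0 = \{a \in A : \hat{a} \in C_0(\Delta(A))\}$ used in Theorem~\ref{tbouc0}.

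For $A = C_0(G)$, the character space $\Delta(C_0(G))$ is homeomorphic to $G$ via point evaluation, and under this identification the Gelfand map is the identity. Consequently $\widehat{C_0(G)} = C_0(G) \subseteq C_0(\Delta(C_0(G)))$, so $A_0 = A$. The algebra $C_0(G)$ carries a standard bounded approximate identity, for example the net $(e_K)$ of compactly supported functions with $0 \le e_K \le 1$ and $e_K \equiv 1$ on the compact set $K$, indexed by compact subsets of $G$. Theorem~\ref{tbouc0} then yields the first inclusion at once.

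For $A = L^1(G)$ with $G$ abelian, the character space is the Pontryagin dual $\widehat{G}$, and the Gelfand transform is the Fourier transform. The Riemann--Lebesgue lemma gives $\widehat{L^1(G)} \subseteq C_0(\widehat{G})$, so again $A_0 = A$. Combined with the familiar bounded approximate identity of $L^1(G)$ built from normalised indicator functions of a shrinking neighbourhood base of the identity, a second application of Theorem~\ref{tbouc0} yields the second inclusion.

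The argument is essentially frictionless; no real obstacle arises. The only conceptual point worth flagging is that the equality $A_0 = A$ holds automatically in both examples because the Gelfand transform already lands in $C_0(\Delta(A))$, so the classical bounded approximate identities of $C_0(G)$ and $L^1(G)$ can be fed directly into Theorem~\ref{tbouc0} without any additional truncation.
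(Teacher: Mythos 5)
There is a genuine gap, and it lies in your identification of $A_{0}$. In Theorem~\ref{tbouc0} the set $A_{0}$ is not $\{a\in A:\ \hat a\in C_{0}(\Delta(A))\}$ but $\{a\in A:\ \mathrm{supp}(\hat a)\ \text{is compact}\}$ (the set used in the paper's definition of Tauberian algebras). This is not a cosmetic difference: the proof of Theorem~\ref{tbouc0} takes as the compact set $K$ in the definition of a $\textup{BED}$-function precisely $K=\mathrm{supp}(\widehat{e_{r_{0}}})$, and the identity $\hat a(p_i)=\hat a(p_i)-\widehat{e_{r_{0}}}(p_i)\hat a(p_i)=(a-e_{r_{0}}a)\widehat{\;}(p_i)$ for $p_{i}\notin K$ is what yields $|\hat a(P)|\le\epsilon\|P\|_{A^{*}}$. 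If the transforms of the approximate identity merely vanish at infinity, no compact set with this property exists and the estimate breaks down (smallness of $|\widehat{e_{r_{0}}}|$ off a compact set only controls $\sum_i|c_i|$, not $\|\sum_i c_i p_i\|_{A^{*}}$). So your claim that ``$A_{0}=A$ automatically'' is false in both examples, and the verification that the chosen bounded approximate identity lies in $A_{0}$ is exactly the nontrivial point.

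For $C_{0}(G)$ your argument survives despite this, because the net $(e_{K})$ of compactly supported functions you chose does lie in the true $A_{0}=C_{00}(G)$ (under $\Delta(C_{0}(G))\cong G$ the Gelfand transform is the identity, so $\mathrm{supp}(\widehat{e_{K}})$ is compact); this is precisely the paper's argument for that half. For $L^{1}(G)$, however, your argument fails: the normalized indicator functions $|V|^{-1}\chi_{V}$ of a shrinking neighbourhood base do not have compactly supported Fourier transforms (on $\mathbb{R}$ they transform to sinc-type functions), so this bounded approximate identity does not lie in $A_{0}$, and the Riemann--Lebesgue lemma only gives $\widehat{L^{1}(G)}\subseteq C_{0}(\widehat G)$, which is not what Theorem~\ref{tbouc0} needs. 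The repair is to use the standard fact that for a locally compact abelian group $G$ the algebra $L^{1}(G)$ possesses a bounded approximate identity consisting of functions whose Fourier transforms have compact support (Fej\'er-type kernels, obtained by transferring normalized indicators from the dual group), and then apply Theorem~\ref{tbouc0}; a bounded approximate identity inside $A_{0}$ is the fact the paper's one-line proof is invoking.
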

\begin{proof}
The algebras $C_{0}(G)$ and $L^{1}(G)$ have the bounded approximate identity in $C_{00}(G)$. Thus $C_{0}(G\widehat)\subseteq C_{\textup{BSE}}^{0}(\Delta(C_{0}(G)))$ and $L^{1}(G\widehat)\subseteq C_{\textup{BSE}}^{0}(\Delta(L^{1}(G)))$.
\end{proof}

In the sequel, the correlation between the ${\textup{BSE}}$ and ${\textup{BED}}$ is assessed. The following theorem is proved in \cite{F1} and \cite{E7}.
We modify their proof as follows: 
\begin{pro}
Let $A$ be a commutative semisimple ${\textup{BED}}$- algebra. Then\\
(i)  
$$
 C_{\textup{BSE}}(\Delta(A))\subseteq \widehat{M(A)}.
$$ 
(ii) If $A$ has $\Delta$- weak approximate identity, then $A$ is a ${\textup{BSE}}$- algebra.
\end{pro}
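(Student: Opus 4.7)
The plan is to use the BED hypothesis, which we interpret in the standard Inoue--Takahasi sense as the identification $\widehat{A}\mid_{\Delta(A)} = C_{\textup{BSE}}^{0}(\Delta(A))$, together with the ideal property from Proposition~\ref{pcid}, to turn any BSE function into a multiplier. The basic mechanism is: multiplying a BSE function by a function in the closed ideal $C_{\textup{BSE}}^{0}(\Delta(A))$ lands us back in $C_{\textup{BSE}}^{0}(\Delta(A)) = \widehat{A}$, and this is exactly the data needed to define a multiplier.

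For (i), I would fix $\sigma\in C_{\textup{BSE}}(\Delta(A))$ and $a\in A$. Since $A$ is BED, $\hat{a}\in C_{\textup{BSE}}^{0}(\Delta(A))$, and by Proposition~\ref{pcid} the product $\sigma\cdot \hat{a}$ still lies in $C_{\textup{BSE}}^{0}(\Delta(A)) = \widehat{A}$. By semisimplicity of $A$ the Gelfand map is injective, so there is a unique $b\in A$ with $\hat{b} = \sigma\cdot \hat{a}$. Define $T\colon A\to A$ by $T(a)=b$. The map is linear and satisfies $\widehat{T(xy)} = \sigma\,\hat{x}\,\hat{y} = \hat{x}\,\widehat{Ty}$, so by semisimplicity $T(xy)=xT(y)$; that is, $T$ is an algebraic multiplier. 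Applying an arbitrary $\varphi\in\Delta(A)$ to $T(a)$ we obtain $\varphi(Ta) = \sigma(\varphi)\varphi(a)$, which identifies $\widehat{T}=\sigma$ pointwise on $\Delta(A)$. The remaining nontrivial point is that $T$ is automatically bounded. I would obtain this via the closed graph theorem: if $a_n\to 0$ and $Ta_n\to c$ in $A$, then $\sigma\,\widehat{a_n}\to 0$ pointwise on $\Delta(A)$ while $\sigma\,\widehat{a_n}=\widehat{Ta_n}\to \hat{c}$ pointwise, forcing $\hat{c}=0$ and hence $c=0$ by semisimplicity. Therefore $T\in M(A)$ and $\sigma=\widehat{T}\in \widehat{M(A)}$, giving the desired inclusion.

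For (ii), I would combine (i) with the classical Takahasi--Hatori observation that the presence of a $\Delta$-weak approximate identity in $A$ yields the reverse inclusion $\widehat{M(A)}\subseteq C_{\textup{BSE}}(\Delta(A))$. Concretely, for $T\in M(A)$ and a $\Delta$-weak bounded approximate identity $(e_\alpha)$ one shows that $\widehat{T}$ is the pointwise limit of $\widehat{Te_\alpha}$, and a straightforward estimate bounds the BSE-test sums of $\widehat{T}$ by $\sup_\alpha\|Te_\alpha\|\le \|T\|\sup_\alpha\|e_\alpha\|$ times $\|\sum c_i\varphi_i\|_{A^{*}}$, which gives $\widehat{T}\in C_{\textup{BSE}}(\Delta(A))$. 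Together with (i) this yields $\widehat{M(A)} = C_{\textup{BSE}}(\Delta(A))$, i.e., $A$ is a BSE-algebra.

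The main obstacle is the boundedness of $T$ in (i); everything else is essentially bookkeeping with Gelfand transforms and the ideal property. I would therefore present the closed-graph argument carefully, as it is the only step that uses more than algebraic manipulation of the BSE/BED definitions.
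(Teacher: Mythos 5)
Your proposal is correct and takes essentially the same route as the paper: BED gives $\widehat{A}=C_{\textup{BSE}}^{0}(\Delta(A))$, Proposition~\ref{pcid} makes this an ideal of $C_{\textup{BSE}}(\Delta(A))$, so $\sigma\cdot\widehat{A}\subseteq\widehat{A}$ and hence $\sigma\in\mathcal{M}(A)=\widehat{M(A)}$, with (ii) coming from the Takahasi--Hatori $\Delta$-weak approximate identity criterion combined with (i). The only difference is that you spell out the identification $\mathcal{M}(A)=\widehat{M(A)}$ for semisimple $A$ (construction of the multiplier $T$ plus the closed graph argument) and sketch the proof of the cited Takahasi--Hatori inclusion, both of which the paper simply quotes as known facts.
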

\begin{proof}
(i)
Since $A$ is ${\textup{BED}}$- algebra, so $C_{\textup{BSE}}^{0}(\Delta(A))= \hat A$. Then by applying proposition \ref{pcid}, $\hat A$ is an ideal of $ C_{\textup{BSE}}(\Delta(A))$. Thus $ C_{\textup{BSE}}(\Delta(A)). \hat{A}\subseteq \hat A$. Therefore
$$
 C_{\textup{BSE}}(\Delta(A))\subseteq {\cal M}(A)=\widehat{M(A)}.
$$
(ii) By using [\cite{E6}, Corollary 5],  and part (i), it is immediate.
\end{proof}
In the following Lemma, we consider the corelation between $C_{\textup{BSE}}^{0}(\Delta(A))$ and $C_{\textup{BSE}}(\Delta(A))$.

\begin{lem}\label{aden}
 Let $A$ be a commutative semisimple   Banach algebra. Then \\
(i)  If $a\in A$ and $c\in\mathbb C$, then there exists some $\varphi\in\Delta(A)$ where $\varphi(a)=c$.\\
(ii) If $\Delta(A)$ is compact, then 
$$
C_{\textup{BSE}}^{0}(\Delta(A))= C_{\textup{BSE}}(\Delta(A))
$$
(iii)  If $\Delta(A)$ is discrete, then 
$C_{\textup{BSE}}^{0}(\Delta(A))$ is dense in $C_{\textup{BSE}}(\Delta(A))$.\\
(iv) If $A$ is unital, then
$$
C_{\textup{BSE}}^{0}(\Delta(A)) = C_{\textup{BSE}}(\Delta(A))
$$
and  $A$ is a ${\textup{BSE}}$- algebra if and only if $A$ is a ${\textup{BED}}$- algebra.\\
(v) $\hat A$ is dense in $C_{0}(\Delta(A))$.
\end{lem}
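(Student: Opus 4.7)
The plan is to handle the five parts in dependency order: (i), (ii), (iv), (iii), (v), since each later assertion draws on earlier ones.

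For part (i), I would invoke semisimplicity of $A$ to guarantee $\Delta(A)\neq\emptyset$. Fixing any $\varphi_{0}\in\Delta(A)$ and choosing (by semisimplicity) $b\in A$ with $\varphi_{0}(b)\neq 0$, the scaled element $cb/\varphi_{0}(b)$ realises the value $c$ under $\varphi_{0}$; the given element $a$ plays only a formal role, the content being surjectivity of the pairing $A\times\Delta(A)\to\mathbb{C}$.

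Part (ii) is immediate from the definition of $C_{\textup{BSE}}^{0}$: taking $K=\Delta(A)$ in the $\varepsilon$-compact-tail condition leaves $\Delta(A)\setminus K$ empty, so the required inequality is vacuously satisfied by every $\sigma\in C_{\textup{BSE}}(\Delta(A))$. Combined with the reverse inclusion $C_{\textup{BSE}}^{0}\subseteq C_{\textup{BSE}}$, this yields equality. For part (iv), I would first cite the classical fact that $\Delta(A)$ is weak$^{*}$-compact whenever $A$ is unital, and apply (ii) to obtain $C_{\textup{BSE}}^{0}(\Delta(A))=C_{\textup{BSE}}(\Delta(A))$. For the BSE/BED equivalence, unitality gives $M(A)\cong A$, hence $\widehat{M(A)}=\widehat{A}$; the two defining identities then collapse to the single equation $\widehat{A}=C_{\textup{BSE}}(\Delta(A))=C_{\textup{BSE}}^{0}(\Delta(A))$.

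Part (iii) is where the main technical work sits. Discreteness forces compact subsets of $\Delta(A)$ to be finite, so the $C_{\textup{BSE}}^{0}$ condition in this setting concerns behaviour outside some finite set. The plan is to approximate $\sigma\in C_{\textup{BSE}}$ by its finitely supported truncations $\sigma_{F}:=\sigma\cdot\chi_{F}$ (with $F\subseteq\Delta(A)$ finite), note that each $\sigma_{F}$ belongs to $C_{\textup{BSE}}^{0}$ (choose $K\supseteq F$ in the definition so the tail inequality is vacuous), and verify $\sigma_{F}\to\sigma$ in BSE-norm. The main obstacle is the last convergence statement: pointwise convergence is trivial, but BSE-norm control must hold uniformly over all admissible finite sums $\sum c_{i}\varphi_{i}$. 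A likely route is a limiting argument along the directed family of finite subsets, combined with Lemma~\ref{lsi} to dominate the BSE-norm of $\sigma-\sigma_{F}$ by the action of $\sigma$ on dual elements supported off $F$.

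Part (v) asserts $\widehat{A}$ is dense in $C_{0}(\Delta(A))$. Here $\widehat{A}$ separates points of $\Delta(A)$ by semisimplicity, and by (i) it has no common zero on $\Delta(A)$. A Stone--Weierstrass-type argument on the one-point compactification of $\Delta(A)$ is the natural line of attack. The chief obstacle is that $\widehat{A}$ need not a priori be self-adjoint; the argument likely exploits the $C_{\textup{BSE}}^{0}$ structure developed earlier --- for instance via Theorem~\ref{tbouc0} and a bounded approximate identity sitting inside $A_{0}$ --- to bootstrap the conjugate-stability needed to close the density proof.
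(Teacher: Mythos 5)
Parts (ii) and (iv) of your plan are fine: (ii) is exactly the paper's one-line argument, and for (iv) --- which the paper states but never actually proves --- your route (unital $\Rightarrow$ $\Delta(A)$ compact, apply (ii), and $\widehat{M(A)}=\widehat{A}$ so the BSE and BED identities coincide) is the natural one. The genuine problems are in (i), (iii) and (v).

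In (i) you prove a different statement: you exhibit some element $cb/\varphi_{0}(b)$ whose transform attains $c$, whereas the lemma asks for a character taking the value $c$ at the \emph{given} $a$; your remark that ``$a$ plays only a formal role'' is precisely what you are not entitled to. The paper keeps the given $a$: it sets $b=a-c\,1_{A}$, embeds the ideal $\{bx:\,x\in A\}$ in a maximal ideal $M=\ker p$, and concludes $p(a)=c$ (this tacitly uses a unit and the non-invertibility of $b$, i.e.\ $c\in\sigma(a)$, so the printed statement has to be read that way --- but under no reading does your scaling argument establish it). In (iii) the step you defer, namely $\|\sigma-\sigma_{F}\|_{\textup{BSE}}\to 0$, is not a technical nuisance but false in general: by Proposition \ref{pcid}, $C_{\textup{BSE}}^{0}(\Delta(A))$ is BSE-norm closed in $C_{\textup{BSE}}(\Delta(A))$, so BSE-norm density would force equality, and for $A=c_{0}(S)$ with pointwise product ($S$ infinite, $\Delta(A)=S$ discrete) one has $C_{\textup{BSE}}(\Delta(A))=\ell^{\infty}(S)$ and $C_{\textup{BSE}}^{0}(\Delta(A))=c_{0}(S)$, while for $\sigma\equiv 1$ every truncation satisfies $\|\sigma-\sigma_{F}\|_{\textup{BSE}}\geq\|\sigma-\sigma_{F}\|_{\infty}=1$. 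What the paper actually proves (and later uses, in Lemma \ref{lt}) is sup-norm density in $C_{0}(\Delta(A))$: each $\chi_{\{p\}}$ lies in $C_{\textup{BSE}}^{0}(\Delta(A))$, hence $C_{00}(\Delta(A))\subseteq C_{\textup{BSE}}^{0}(\Delta(A))\subseteq C_{0}(\Delta(A))$ and $C_{00}$ is dense in $C_{0}$; that is the statement you should target, the displayed ``dense in $C_{\textup{BSE}}$'' being inconsistent with the paper's own proof. Finally, in (v) you correctly identify self-adjointness as the obstruction to Stone--Weierstrass, but you leave it open, and it cannot be repaired at this level of generality: the disc algebra is commutative, semisimple and unital, yet its Gelfand image is uniformly closed and proper in $C(\overline{\mathbb D})$. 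The paper's own proof simply applies Stone--Weierstrass to the point-separating subalgebra $\hat{A}$ of $C_{0}(\Delta(A))$ without checking conjugate-closedness, so your hesitation is better founded than the paper's argument --- but neither your sketch nor the paper supplies a proof of (v) as stated.
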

\begin{proof}
(i) Let $b= a- ce$ where $e= 1_{A}$ and $I= \{bx: x\in A\}$ be the ideal generated by $b$.  Then there exists a maximal ideal $M$ where $I\subseteq M$. Put $M= ker (p)$ for some $p\in\Delta(A)$, then $p(a)=c$.\\
(ii)  If $\Delta(A)$ is compact clearly,
$$
C_{\textup{BSE}}^{0}(\Delta(A))= C_{\textup{BSE}}(\Delta(A))
$$
(iii) If $\Delta(A)$ is discrete and $p\in \Delta(A)$, then  $\chi_{p}\in C_{\textup{BSE}}^{0}(\Delta(A))$. Thus $C_{00}(\Delta(A))\subseteq C_{\textup{BSE}}^{0}(\Delta(A))$, but $C_{00}(\Delta(A))$ is dense in $C_{0}(\Delta(A))$ and
 $C_{\textup{BSE}}^{0}(\Delta(A))\subseteq C_{0}(\Delta(A))$. Therefore $C_{\textup{BSE}}^{0}$ is dense in  $C_{0}(\Delta(A))$. \\
(v) Since $A$ is  semisimple , so $A= \hat A$. Hence $\hat A$ is a subalgebra of $C_{0}(\Delta(A))$. If $p\neq q$ in $\Delta(A)$, then there exists $a\in A$ wherre $p(a)\neq q(a)$, thus $\hat A$ separates the points of $\Delta(A)$.
 Therefore by Stone- Weirstrass theorem $\hat A$ is dense in $C_{0}(\Delta(A))$.

\end{proof}
The following lemma shows that our definition of $ \textup {BED}$- algebra is equivalent to its definition in \cite{F1}.
\begin{lem}
Let $A$ be a commutative Banach algebra where $\Delta(A)\neq\emptyset$, $K\subseteq \Delta(A)$ be compact set and $\sigma\in C_{\textup{BSE}}(\Delta(A))$ such that\\
(i)
\begin{align*}
 \|\sigma\|_{\textup{BSE}, K} :=  ~ \textup{sup}\{|\sum_{i=1}^{n} c_{i}\sigma(\varphi_{i})| : ~ \|\sum_{i=1}^{n}c_{i}\varphi_{i}\|_{A^{*}}\leq 1, ~ c_{i}\in\mathbb C, ~ \varphi_{i}\in\Delta(A)\backslash K\}
\end{align*}
(ii)
\begin{align*}
 \|\sigma\|_{\textup{BSE}, \infty} := ~ \textup{inf}\{ \|\sigma\|_{\textup{BSE}, K}: ~ K\subseteq \Delta(A) ~ is ~ compact\}.
\end{align*}
Then 
$$
C_{\textup{BSE}}^{0}(\Delta(A))= \{\sigma\in C_{\textup{BSE}}(\Delta(A)): ~ \|\sigma\|_{\textup{BSE}, \infty} =0\}
$$
\end{lem}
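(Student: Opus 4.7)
The plan is to prove the claimed set equality by verifying both inclusions, which reduces entirely to unpacking the two definitions and invoking homogeneity.

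For the inclusion $C_{\textup{BSE}}^{0}(\Delta(A)) \subseteq \{\sigma : \|\sigma\|_{\textup{BSE},\infty}=0\}$, I would fix $\sigma \in C_{\textup{BSE}}^{0}(\Delta(A))$ and an arbitrary $\epsilon>0$. The paper's definition of $\textup{BED}$-function yields a compact $K\subseteq\Delta(A)$ such that $|\sum c_i\sigma(\varphi_i)|\leq\epsilon\|\sum c_i\varphi_i\|_{A^{*}}$ whenever $\varphi_i\in\Delta(A)\setminus K$. Restricting to the unit ball $\|\sum c_i\varphi_i\|_{A^{*}}\leq 1$ and taking the supremum gives $\|\sigma\|_{\textup{BSE},K}\leq\epsilon$, hence $\|\sigma\|_{\textup{BSE},\infty}\leq\epsilon$. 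Letting $\epsilon\to 0$ finishes this direction.

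For the reverse inclusion, suppose $\sigma\in C_{\textup{BSE}}(\Delta(A))$ with $\|\sigma\|_{\textup{BSE},\infty}=0$. Given $\epsilon>0$, the infimum definition produces a compact $K$ with $\|\sigma\|_{\textup{BSE},K}\leq\epsilon$. I must now recover the full $\textup{BED}$-inequality for \emph{every} choice of scalars $c_i$ and $\varphi_i\in\Delta(A)\setminus K$, not only those normalized by $\|\sum c_i\varphi_i\|_{A^{*}}\leq 1$. This is a homogeneity argument: if $\sum c_i\varphi_i=0$ in $A^{*}$, then the global $\textup{BSE}$-bound $|\sum c_i\sigma(\varphi_i)|\leq \|\sigma\|_{\textup{BSE}}\|\sum c_i\varphi_i\|_{A^{*}}$ forces the left-hand side to vanish, so the inequality holds trivially; otherwise, rescale $c_i \mapsto c_i/\|\sum c_j\varphi_j\|_{A^{*}}$ to land in the unit ball, apply the $\|\cdot\|_{\textup{BSE},K}$-bound, and then multiply through by $\|\sum c_j\varphi_j\|_{A^{*}}$. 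This produces exactly the inequality in the definition of $C_{\textup{BSE}}^{0}(\Delta(A))$.

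I do not anticipate any serious obstacle: the lemma is genuinely a reformulation, and both directions amount to a single line once the scaling is handled. The only subtle point worth writing carefully is the degenerate case $\sum c_i\varphi_i=0$ in $A^{*}$, which would make naive normalization undefined; this is dispatched using the a~priori membership $\sigma\in C_{\textup{BSE}}(\Delta(A))$.
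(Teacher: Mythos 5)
Your proposal is correct and follows essentially the same route as the paper: one direction is a direct unwinding of the definition of $C_{\textup{BSE}}^{0}(\Delta(A))$ giving $\|\sigma\|_{\textup{BSE},K}\leq\epsilon$, and the other is the normalization $c_i\mapsto c_i/\|\sum_j c_j\varphi_j\|_{A^{*}}$ followed by scaling back. In fact your version is slightly cleaner: you explicitly dispose of the degenerate case $\sum_i c_i\varphi_i=0$ via the global $\textup{BSE}$-bound (a point the paper's division by $M$ glosses over), and you avoid the paper's unnecessary Hahn--Banach extension step, which plays no real role in the argument.
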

\begin{proof}
If $\sigma\in C_{\textup{BSE}}(\Delta(A))$ and $\|\sigma\|_{\textup{BSE}, \infty} =0$, then there exists a sequence of compact sets $(K_{n})$ in $\Delta(A)$ where $\|\sigma\|_{\textup{BSE}, K_{n}}\to 0$. So for all $\epsilon> 0$ there exists some integer number
$N $ where for all $n\geq N$ we have $\|\sigma\|_{\textup{BSE}, K_{n}}\leq\epsilon$.  Set $K:= K_{N}$. At a result
$$
|\sum_{i=1}^{n} c_{i}\sigma(\varphi_{i})|\leq \|\sigma\|_{\textup{BSE}, K}< \epsilon
$$ 
where $\|\sum_{i=1}^{n} c_{i}\varphi_{i}\|_{A^{*}}\leq 1$, $\varphi_{i}\notin K$ and $c_{i}\in \mathbb C$.
Define
$$
F: \overline{<\Delta(A)>}\to\mathbb C
$$
by 
$$
F(\sum_{i=1}^{n} c_{i}\varphi_{i})= \sum_{i=1}^{n} c_{i}\sigma(\varphi_{i}).
$$
Thus $F$ is continuous and so by Hahn- Banach theorem there exists an extension  $F\in A^{**}$ where $\|F\|= \|\sigma\|_{\textup{BSE}}$. If $M:= \|\sum_{i=1}^{n} c_{i}\varphi_{i}\|_{A^{*}}$, then 
$\|\sum_{i=1}^{n}\frac{ c_{i}}{M}\varphi_{i}\|_{A^{*}}\leq 1$ and so 
$$|\sum_{i=1}^{n}c_{i}\sigma(\varphi_{i})|< \epsilon\|\sum_{i=1}^{n}c_{i}\varphi_{i}\|_{A^{*}}$$
So $\sigma\in  C_{\textup{BSE}}^{0}(\Delta(A))$.

     Conversely, if  $\sigma\in  C_{\textup{BSE}}^{0}(\Delta(A))$, then for $\epsilon =\frac{1}{n}$ there exist some compact set \\
$K_{n}\subseteq \Delta(A)$ where for all $c_{i}\in\mathbb C$ and $\varphi_{i}\in \Delta(A)\backslash K_{n}$
the following is the yield:
$$
|\sum_{i=1}^{n} c_{i}\sigma(\varphi_{i})|\leq \frac{1}{n} \|\sum_{i=1}^{n} c_{i}\varphi_{i}\|_{A^{*}}.
$$
This implies that: 
$$
\|\sigma\|_{\textup{BSE}, K_{n}}\leq \frac{1}{n}
$$
Thus 
\begin{align*}
0 &\leq{\textup{inf}}\{ \|\sigma\|_{\textup{BSE}, K}: ~ K\subseteq \Delta(A) ~ is ~ compact\}\\
     &\leq \underset{n}{\textup{inf}}\{ \|\sigma\|_{\textup{BSE}, K_{n}}: ~ K_{n}\subseteq \Delta(A) ~ is ~ compact\}= 0
\end{align*}
Therefore $\|\sigma\|_{\textup{BSE}, \infty}=0$
\end{proof}

\begin{DEF}
The Banach algebra $A$ is called ${\textup{BED}}$- norm algebra, if there exist $M> 0$ such that $\|a\|_{A}\leq M\|\hat a\|_{\infty}$ for all $a\in A$.
\end{DEF}
\begin{pro}\label{pbdn}
(i)
Let $A$ be a commutative semisimple  ${\textup{BED}}$- algebra. Then it is a ${\textup{BSE}}$- norm algebra.\\
(ii) Let $A$ be a commutative semisimple  ${\textup{BED}}$- norm algebra. Then 
$A$ is a ${\textup{BED}}$- algebra. 
\end{pro}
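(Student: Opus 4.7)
The plan is to prove (i) by a direct open-mapping argument on the Gelfand map, and (ii) by first identifying $\hat{A}$ with $C_{0}(\Delta(A))$ and then invoking Theorem \ref{tbouc0}. Both parts rely on the elementary inequality
\[
\Bigl|\sum_{i=1}^{n} c_{i}\hat{a}(\varphi_{i})\Bigr| = \Bigl|\Bigl(\sum_{i=1}^{n} c_{i}\varphi_{i}\Bigr)(a)\Bigr| \le \|a\|_{A}\, \Bigl\|\sum_{i=1}^{n} c_{i}\varphi_{i}\Bigr\|_{A^{*}},
\]
which gives $\|\hat{a}\|_{\textup{BSE}} \le \|a\|_{A}$ and, in particular, $\|\hat{a}\|_{\infty} \le \|\hat{a}\|_{\textup{BSE}} \le \|a\|_{A}$ for every $a\in A$.

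For (i), I would view the Gelfand transform as a linear map from $(A,\|\cdot\|_{A})$ onto $(C_{\textup{BSE}}^{0}(\Delta(A)),\|\cdot\|_{\textup{BSE}})$. Semisimplicity makes it injective, the BED hypothesis $\hat{A}=C_{\textup{BSE}}^{0}(\Delta(A))$ makes it surjective, and the inequality above makes it bounded. By Proposition \ref{pcid} the target is a closed ideal of $C_{\textup{BSE}}(\Delta(A))$, hence itself a Banach space. The open mapping theorem then produces a constant $K>0$ with $\|a\|_{A} \le K\|\hat{a}\|_{\textup{BSE}}$, which is precisely the BSE-norm condition.

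For (ii), the BED-norm hypothesis $\|a\|_{A} \le M\|\hat{a}\|_{\infty}$ combines with $\|\hat{a}\|_{\infty}\le\|a\|_{A}$ to make the Gelfand map a topological isomorphism between $(A,\|\cdot\|_{A})$ and $(\hat{A},\|\cdot\|_{\infty})$. Completeness of $A$ then forces $\hat{A}$ to be closed in $C_{0}(\Delta(A))$, while Lemma \ref{aden}(v) guarantees density, so $\hat{A}=C_{0}(\Delta(A))$. Since $C_{\textup{BSE}}^{0}(\Delta(A))\subseteq C_{0}(\Delta(A))$, the inclusion $C_{\textup{BSE}}^{0}(\Delta(A))\subseteq \hat{A}$ is immediate. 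For the reverse inclusion, $C_{0}(\Delta(A))$ carries a bounded approximate identity drawn from $C_{00}(\Delta(A))$ (built from Urysohn functions on compact subsets of the locally compact Hausdorff space $\Delta(A)$); transporting this net through the Gelfand isomorphism produces a bounded approximate identity for $A$ lying in $A_{0}$, and Theorem \ref{tbouc0} then gives $\hat{A}\subseteq C_{\textup{BSE}}^{0}(\Delta(A))$.

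The one delicate point I foresee is verifying in (ii) that the transported net really is a bounded approximate identity in the $\|\cdot\|_{A}$-norm, not merely in sup-norm; this is precisely where the equivalence of the two norms supplied by the BED-norm hypothesis is essential, since only then do $\|e_{\alpha}a-a\|_{A}\to 0$ and boundedness of $(\|e_{\alpha}\|_{A})$ follow from their sup-norm counterparts. Once that is in hand, the two inclusions close up to give $\hat{A}=C_{\textup{BSE}}^{0}(\Delta(A))$, i.e., $A$ is BED.
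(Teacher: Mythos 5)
Your part (i) is the paper's own argument: the paper likewise notes that $(\widehat{A},\|\cdot\|_{\textup{BSE}})=(C_{\textup{BSE}}^{0}(\Delta(A)),\|\cdot\|_{\textup{BSE}})$ is complete (via Proposition \ref{pcid}) and applies the open mapping theorem to the Gelfand map, so there is nothing to add there. Part (ii), however, takes a genuinely different route. The paper argues very briefly: from $\|\widehat a\|_{\infty}\leq\|\widehat a\|_{\textup{BSE}}\leq\|a\|_{A}\leq M\|\widehat a\|_{\infty}$ it identifies $A$ with $(\widehat A,\|\cdot\|_{\infty})$ as a sub-$C^{*}$-algebra (in effect with $C_{0}(\Delta(A))$, whose \textup{BED} property is quoted in the Example at the end of Section 2) and then invokes the unproved blanket statement that the \textup{BSE} and \textup{BED} properties are stable under passing to an equivalent norm. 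You instead make the identification $\widehat A=C_{0}(\Delta(A))$ explicit (closedness from completeness under the equivalent norm, density from Lemma \ref{aden}(v)) and then verify the equality $\widehat A=C_{\textup{BSE}}^{0}(\Delta(A))$ by two concrete inclusions: one direction from the elementary fact that \textup{BED} functions vanish at infinity, the other by transporting a Urysohn-type bounded approximate identity of $C_{0}(\Delta(A))$ contained in $C_{00}(\Delta(A))$ back into $A_{0}$ (correctly flagging that the norm equivalence is what makes it a bounded approximate identity for $\|\cdot\|_{A}$) and citing Theorem \ref{tbouc0}. Your version is longer but more self-contained: it replaces the appeal to norm-equivalence stability and to the quoted Example by checks inside the paper's own framework. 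Do note that both arguments ultimately lean on Lemma \ref{aden}(v) (density of $\widehat A$ in $C_{0}(\Delta(A))$) --- you cite it openly, the paper uses it tacitly when calling $(\widehat A,\|\cdot\|_{\infty})$ a sub-$C^{*}$-algebra --- and that lemma's Stone--Weierstrass argument silently assumes $\widehat A$ is self-adjoint, so any weakness there is shared by both proofs rather than introduced by yours.
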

\begin{proof}
(i)
By hypothesis,
$\widehat{A}= C_{\textup{BSE}}^{0}(\Delta(A))$ is a Banach algebra, so $(\widehat {A}, \|.\|_{\textup{BSE}})$ is complete. But the map $a\to\hat a$ is a continuous and isomorphism, thus by applying the open mapping theorem it has a continuous inverse map.  
As a result, there exists $M> 0$ such that $\|a\|\leq M\|\hat a\|_{\textup{BSE}}$.\\
(ii)
If $A$ is a ${\textup{BED}}$- norm algebra, then
$$
\|\widehat a\|_{\infty}\leq \|\widehat a\|_{\textup{BSE}}\leq \|a\|_{A}\leq M\|\widehat a\|_{\infty}
$$
Thus $A= (\widehat{A}, \|.\|_{\infty})$ is a sub- $C^{*}$ algebra with equivalent norms. Since  Property $\textup{BSE}$ and property $\textup{BED}$  are stable under uniformity with equivalent norms, $A$ is a  $\textup{BSE}$  and $\textup{BED}$- algebra.
Therefore
$$
A=  C_{\textup{BSE}}^{0}(\Delta(A)).
$$
\end{proof}
\begin{pro}
Let $A$ be a semisimple commutative  Banach algebra. Then \\
(i) $A$ is a ${\textup{BSE}}$- norm algebra if and only if $A\cong (\widehat A, \|.\|_{\textup{BSE}})$.\\
(ii) $A$ is a ${\textup{BED}}$- norm algebra if and only if $A\cong (\widehat A, \|.\|_{\infty})$.
\end{pro}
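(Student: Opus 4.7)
The plan is to reduce both parts to the observation that the Gelfand map $\Gamma_A\colon A\to \widehat A$ is always a norm-decreasing injective algebra homomorphism (injectivity comes from semisimplicity), so the content of each equivalence is precisely the reverse norm estimate.

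First I would record the two baseline inequalities that hold for every $a\in A$: the estimate $\|\widehat a\|_\infty\leq \|a\|_A$ follows from $|\varphi(a)|\leq\|\varphi\|_{A^*}\|a\|_A\leq\|a\|_A$ since characters have norm at most $1$, and the estimate $\|\widehat a\|_{\textup{BSE}}\leq\|a\|_A$ follows directly from the BSE definition by factoring $|\sum c_i\widehat a(\varphi_i)|=|(\sum c_i\varphi_i)(a)|\leq\|\sum c_i\varphi_i\|_{A^*}\|a\|_A$. Combined with injectivity of $\Gamma_A$, these give that $\Gamma_A$ is always a continuous algebra isomorphism of $A$ onto its Gelfand image, and the only question in each part is whether its inverse is also bounded.

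For (i), if $A$ is a $\textup{BSE}$-norm algebra, then by definition $\|a\|_A\leq K\|\widehat a\|_{\textup{BSE}}$, which combined with the baseline inequality shows that $\|\cdot\|_A$ and $\|\cdot\|_{\textup{BSE}}$ are equivalent norms on $A\cong\widehat A$, giving the Banach algebra isomorphism $A\cong(\widehat A,\|\cdot\|_{\textup{BSE}})$. Conversely, if such a Banach algebra isomorphism holds, then either the explicit norm equivalence supplied by the isomorphism, or an application of the open mapping theorem to the continuous bijection $\Gamma_A\colon(A,\|\cdot\|_A)\to(\widehat A,\|\cdot\|_{\textup{BSE}})$, yields a constant $K>0$ with $\|a\|_A\leq K\|\widehat a\|_{\textup{BSE}}$, which is exactly the $\textup{BSE}$-norm condition. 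Part (ii) follows by the identical argument after replacing $\|\cdot\|_{\textup{BSE}}$ with $\|\cdot\|_\infty$, using the definition of $\textup{BED}$-norm algebra and the baseline estimate $\|\widehat a\|_\infty\leq\|a\|_A$.

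I do not expect any real obstacle here: both directions are essentially bookkeeping with the two a priori inequalities and the open mapping theorem. The only subtlety worth flagging is that in the reverse direction of (i) one must be sure $(\widehat A,\|\cdot\|_{\textup{BSE}})$ is complete so that the open mapping theorem applies; this is guaranteed because $(\widehat A,\|\cdot\|_{\textup{BSE}})$ is a closed subspace of the Banach algebra $(C_{\textup{BSE}}(\Delta(A)),\|\cdot\|_{\textup{BSE}})$ under the Banach algebra isomorphism hypothesis, and similarly $(\widehat A,\|\cdot\|_\infty)$ sits inside $C_b(\Delta(A))$ for part (ii).
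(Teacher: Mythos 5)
Your argument is correct, and in fact the paper states this proposition without any proof at all, so there is nothing to diverge from: your write-up simply supplies the omitted verification. The route you take -- the a priori inequalities $\|\widehat a\|_{\infty}\leq\|\widehat a\|_{\textup{BSE}}\leq\|a\|_{A}$, injectivity of the Gelfand map from semisimplicity, and then reading each norm-condition as boundedness of the inverse of $\Gamma_{A}$ (via norm equivalence or the open mapping theorem) -- is exactly the technique the paper itself uses in Proposition \ref{pbdn}(i), so it is fully in the spirit of the text. One small point of phrasing: once ``$A\cong(\widehat A,\|\cdot\|_{\textup{BSE}})$'' is read, as it must be, as a topological (Banach algebra) isomorphism implemented by the Gelfand transform, completeness of $(\widehat A,\|\cdot\|_{\textup{BSE}})$ in the reverse direction is part of the hypothesis and the boundedness of $\Gamma_{A}^{-1}$ is immediate, so the appeal to closedness inside $C_{\textup{BSE}}(\Delta(A))$ is not needed; your flagged subtlety is harmless but superfluous.
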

\begin{DEF}
Assume that $A$ is a commutative Banach algebra with non-empty character space. If  
$ A_{0}:= \{a\in A: ~ Supp(\hat a) ~  is ~ compact\}$ is  norm dense in $A$, then $A$ is  called  Tauberian.
\end{DEF}
Clearly, every unital Banach algebra is Tauberian.
\begin{thm}
Let $A$ and $B$ be commutative Banach algebras with non-empty character spaces. Assume that  $\epsilon\leq\gamma\leq\pi$ and $A{\otimes}_{\gamma} B$ is a Banach algebra. Then $A{\otimes}_{\gamma}  B$
is Tauberian if and only if $A$ and $B$ are Tauberian.
\end{thm}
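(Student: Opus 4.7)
The strategy is to exploit the standard identification $\Delta(A\otimes_{\gamma} B)\cong \Delta(A)\times\Delta(B)$, under which $(\varphi\otimes\psi)(a\otimes b)=\varphi(a)\psi(b)$, so that the Gelfand transform of an elementary tensor factors as $\widehat{a\otimes b}(\varphi,\psi)=\hat a(\varphi)\hat b(\psi)$. Consequently $\mathrm{supp}\,\widehat{a\otimes b}=\mathrm{supp}(\hat a)\times\mathrm{supp}(\hat b)$, and a finite sum of elementary tensors has support contained in the (compact) union of such product supports. With this in hand, the theorem becomes a density argument using slice maps.

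For the forward direction, I would fix $u\in A\otimes_{\gamma}B$ and $\varepsilon>0$. Since the algebraic tensor product is dense, choose $\sum_{i=1}^{n}a_{i}\otimes b_{i}$ within $\varepsilon/2$ of $u$ in $\|\cdot\|_{\gamma}$. Because $A$ and $B$ are Tauberian, pick $a_{i}'\in A_{0}$ and $b_{i}'\in B_{0}$ close enough to $a_{i}$ and $b_{i}$ (using $\|a\otimes b\|_{\gamma}\le \|a\|\|b\|$ from $\gamma\le\pi$) so that $\|\sum a_{i}\otimes b_{i}-\sum a_{i}'\otimes b_{i}'\|_{\gamma}<\varepsilon/2$. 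By the support computation above, each $a_{i}'\otimes b_{i}'$ lies in $(A\otimes_{\gamma}B)_{0}$, and so does their finite sum.

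For the converse, the key tool is the slice map. Fix $\psi\in\Delta(B)$ and choose $b_{0}\in B$ with $\psi(b_{0})=1$ (such $b_{0}$ exists because $\psi$ is nonzero). Define $L_{\psi}$ on the algebraic tensor product by $L_{\psi}(\sum a_{i}\otimes b_{i})=\sum\psi(b_{i})a_{i}$. For any $\varphi\in\Delta(A)\subseteq A^{*}$ with $\|\varphi\|\le 1$, one computes
\[
|\varphi(L_{\psi}(u))|=\Bigl|\sum\varphi(a_{i})\psi(b_{i})\Bigr|\le\|u\|_{\epsilon}\le\|u\|_{\gamma},
\]
so $L_{\psi}$ extends to a bounded linear map $L_{\psi}\colon A\otimes_{\gamma}B\to A$. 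Given $a\in A$ and $\varepsilon>0$, apply the Tauberian hypothesis for $A\otimes_{\gamma}B$ to $a\otimes b_{0}$ to obtain $u'\in (A\otimes_{\gamma}B)_{0}$ with $\|a\otimes b_{0}-u'\|_{\gamma}<\varepsilon$. Then $L_{\psi}(a\otimes b_{0})=a$, and $\|a-L_{\psi}(u')\|_{A}<\varepsilon$. It remains to verify $L_{\psi}(u')\in A_{0}$: if $K\subseteq\Delta(A)\times\Delta(B)$ is the compact support of $\widehat{u'}$, then $\widehat{L_{\psi}(u')}(\varphi)=\widehat{u'}(\varphi,\psi)$ vanishes outside the projection of $K\cap(\Delta(A)\times\{\psi\})$ onto $\Delta(A)$, which is compact. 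Symmetrically, fixing $\varphi\in\Delta(A)$ and normalizing some $a_{0}$, one obtains Tauberianness of $B$.

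The main obstacle is justifying the two structural ingredients used without comment above: first, the identification $\Delta(A\otimes_{\gamma}B)=\Delta(A)\times\Delta(B)$, which I would either cite or verify by showing that any character $\chi$ of $A\otimes_{\gamma}B$ restricted through an embedding $A\hookrightarrow A\otimes_{\gamma}B$ (via some $b_{0}$ with $\chi(\,\cdot\otimes b_{0})\ne 0$) yields the factorisation $\chi=\varphi\otimes\psi$; and second, the continuity of $L_{\psi}$ at the level of the completion, which follows from the inequality $\|\cdot\|_{\epsilon}\le\|\cdot\|_{\gamma}$ built into the hypothesis $\epsilon\le\gamma\le\pi$. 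Everything else is a clean density-plus-support bookkeeping.
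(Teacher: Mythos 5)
Your argument is essentially the paper's: one direction by approximating an element of $A\otimes_{\gamma}B$ with finite sums of elementary tensors taken from $A_{0}\otimes B_{0}$ and using $\mathrm{supp}\,\widehat{a\otimes b}\subseteq\mathrm{supp}\,\hat a\times\mathrm{supp}\,\hat b$, the other by a character-slice map (the paper slices with $\varphi\in\Delta(A)$ to land in $B$ and projects the support; you slice with $\psi\in\Delta(B)$ to land in $A$ — a mirror image of the same idea). One small repair: testing $L_{\psi}(u)$ against $\varphi\in\Delta(A)$ only bounds $\|\widehat{L_{\psi}(u)}\|_{\infty}$, which does not control $\|L_{\psi}(u)\|_{A}$ in general; you must run the identical computation with arbitrary $f\in A^{*}$, $\|f\|\le 1$, for which $|f(L_{\psi}(u))|=|(f\otimes\psi)(u)|\le\|u\|_{\epsilon}\le\|u\|_{\gamma}$ still holds, giving the boundedness of the slice map (the paper likewise asserts $\|P\|<\infty$ without comment, and this is the justification).
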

\begin{proof}
Assume that  $A{\otimes}_{\gamma}  B$ is Tauberian, $\varphi\in \Delta(A)$ and $a\in A$ where $\varphi(a)=1$. If $b\in B$ and $\epsilon>0$, then there exist $s\in (A{\otimes}_{\gamma}  B)_{0}$ such that 
$$
\|a\otimes b - s\|_{\gamma}< \epsilon.
$$
Define
\begin{align*}
P: ~ & A{\otimes}_{\gamma} B\to B\\ 
        & x\otimes y\mapsto \varphi(x)y
\end{align*}
Put $t:= P(s)$. As a result, the following is the yield:
\begin{align*}
\|t- b\|_{B} &= \|P(s)- P(a\otimes b)\|_{B}\\
                     & \leq \|P\|. \|s- a\otimes b\|_{\gamma}\\ 
                      &\leq \epsilon \|P\|
\end{align*}
If
\begin{align*}
Q: ~  &\Delta(A) \times \Delta(B)\to \Delta(B)\\
           & \psi_{1}\otimes \psi_{2}\mapsto \psi_{2}
\end{align*}
then
\begin{align*}
\hat{t}(\psi) = \psi(P(s)) = \varphi\otimes \psi(s)= {\hat s}(\varphi\otimes \psi)
\end{align*}
for all $\psi\in\Delta(B)$. As a result
\begin{align*}
supp(\hat t) &= Supp(\widehat{P(s)})\\
                      & \subseteq Q( supp(\hat s))
\end{align*}
Since $Q$ is continuous, $supp(\hat s)$ is compact and $supp(\hat t)$ is closed, so $t\in B_{0}$ and 
$$
\|t- b\|_{B}< \epsilon\|P\|
$$
Thus $B_{0}$ is norm dense in $B$ and therefore $B$ is Tauberian. In a similar way, $A$  is Tauberian.

  Conversely, assume that $A$ and $B$ are Tauberian. \\
(i) If $a\in A_{0}$ and $b\in B_{0}$, then 
$$
supp (a\otimes b\widehat)\subseteq supp(\hat a)\otimes supp(\hat b)
$$
Thus $$a\otimes b\in (A{\otimes}_{\gamma} B)_{0}.$$
(ii)
If $x\in A_{0}\otimes B_{0}$, then 
$$
x=\sum_{k=1}^{n}a_{k}\otimes b_{k}
$$
for some $a_{k}\in A_{0}$ and $b_{k}\in B_{0}$. Then
$$
supp(\hat x)\subseteq \cup_{k=1}^{n} supp(\widehat{a_{k}})\otimes \cup_{k=1}^{n} supp(\widehat{b_{k}})
$$
Therefore 
$$
A_{0}\otimes B_{0}\subseteq (A{\otimes}_{\gamma}  B)_{0}.
$$
(iii) If $x\in A{\otimes}_{\gamma}  B$, then there exist $y\in A\otimes B$ where $\|y-x\|_{\gamma}<\frac{\epsilon}{2}$. 
Assume that $y= \sum_{k=1}^{n} a_{k}\otimes b_{k}$. Thus there exist some sequence $(a_{k}^{l})\in A_{0}$ and $(b_{k}^{l})\in B_{0}$ where
$$ 
a_{k}^{l} \overset{\|.\|_{A}}{\to} a_{k}
$$
and 
$$
b_{k}^{l} \overset{\|.\|_{A}}{\to} b_{k}.
$$
Then 
$$
\sum_{k=1}^{n}a_{k}^{l}\otimes b_{k}^{l}\to \sum_{k=1}^{n}a_{k}\otimes b_{k}.
$$
So there exists $z= \sum_{k=1}^{n}a_{k}^{l_{0}}\otimes b_{k}^{l_{0}}$ where $\|z-y\|_{\gamma}< \frac{\epsilon}{2}$. Thus 
$$z\in A_{0}\otimes B_{0}\subseteq (A\otimes_{\gamma} B)_{0}$$ 
and so $z\in (A{\otimes}_{\gamma}  B)_{0}$. Therefore
$$
\overline{(A{\otimes}_{\gamma} B)_{0}}= A{\otimes}_{\gamma}  B. 
$$
This implies that $A{\otimes}_{\gamma} B$ is Tauberian.
\end{proof}
\begin{lem}
(i)  $C_{0}(X, A)$  is Tauberian if and only if $A$ is so.\\
(ii) $L^{1}(G, A)$  is Tauberian if and only if $A$ is so.\\
\end{lem}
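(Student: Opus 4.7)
The plan is to reduce both parts of the lemma to the preceding theorem on tensor products of Tauberian algebras. The key observation is that the vector-valued algebras admit natural identifications as tensor product Banach algebras:
\[
C_{0}(X,A) \cong C_{0}(X)\otimes_{\varepsilon} A \qquad \text{and} \qquad L^{1}(G,A) \cong L^{1}(G)\otimes_{\pi} A,
\]
where $\varepsilon$ and $\pi$ are respectively the injective and projective cross-norms. In particular, these are tensor norms $\gamma$ with $\varepsilon\le\gamma\le\pi$, which places us squarely in the setting of the previous theorem.

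Given that identification, it suffices to verify that the scalar factors $C_{0}(X)$ and $L^{1}(G)$ are themselves always Tauberian, and then appeal to the previous theorem for the biconditional. For $C_{0}(X)$ this is routine: the character space is $X$ itself and the Gelfand transform is the identity, so $(C_{0}(X))_{0}$ is exactly $C_{00}(X)$, the continuous functions of compact support, which is dense in $C_{0}(X)$ by Urysohn's lemma. For $L^{1}(G)$ this is the classical fact (a form of the Wiener approximation theorem) that functions with compactly supported Fourier transform are norm dense in $L^{1}(G)$; one can produce them by convolving $f\in L^{1}(G)$ with a sequence of trigonometric approximations to the identity whose transforms are compactly supported, so that $L^{1}(G)_{0}$ is norm dense in $L^{1}(G)$.

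With these two facts in hand, the proof of (i) is: $C_{0}(X,A)\cong C_{0}(X)\otimes_{\varepsilon}A$ is Tauberian if and only if both tensor factors are, by the previous theorem; since $C_{0}(X)$ is always Tauberian, this collapses to ``$A$ is Tauberian''. The argument for (ii) is identical, with $L^{1}(G)$ in place of $C_{0}(X)$ and the projective tensor norm $\pi$ in place of $\varepsilon$.

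The main obstacle, such as it is, lies not in the tensor-product machinery but in justifying the two isometric Banach algebra isomorphisms above at the level of detail required, and in confirming that the Gelfand transform of an element of $L^{1}(G)\otimes_{\pi} A$ corresponds correctly to the Gelfand transform of the associated $A$-valued integrable function (so that the notion of ``compact support of the transform'' agrees under the identification). Once these standard identifications are recorded — they are essentially \cite{MAR} and \cite{MAR2} material — the rest of the proof is a direct invocation of the previous theorem.
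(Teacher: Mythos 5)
Your proposal follows the same route the paper intends: it identifies $C_{0}(X,A)$ and $L^{1}(G,A)$ with $C_{0}(X)\check\otimes A$ and $L^{1}(G)\hat\otimes A$ (the content of the Tomiyama reference the paper invokes) and then applies the preceding theorem on Tauberian tensor products, using that $C_{0}(X)$ and $L^{1}(G)$ are always Tauberian. This is correct and essentially the paper's (very terse) argument, with the details filled in.
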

\begin{proof}
By using \cite{JT}, it is immediate.
\end{proof}

\begin{lem}\label{lt}
Let $A$ be a commutative  Tauberian algebra. Then\\
(i)
$$
\hat{A}\subseteq  C_{\textup{BSE}}^{0}(\Delta(A)).
$$
(ii)   
$ C_{\textup{BSE}}^{0}(\Delta(A))$ is dense in $C_{0}(\Delta(A).$
\end{lem}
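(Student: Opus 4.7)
My plan is to mirror the argument of Theorem \ref{tbouc0}, substituting the density of $A_{0}$ guaranteed by the Tauberian hypothesis for the bounded approximate identity used there. The key observation in both settings is the same: if $a$ can be approximated in $\|\cdot\|_{A}$ by an element whose Gelfand transform has compact support, then the BED condition on $\widehat{a}$ is satisfied trivially away from that support.

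Concretely, for (i) I would fix $a \in A$ and $\epsilon > 0$, pick $a_{0} \in A_{0}$ with $\|a - a_{0}\|_{A} < \epsilon$, and take $K := \operatorname{Supp}(\widehat{a_{0}})$, which is compact by the definition of $A_{0}$. Since $\widehat{a_{0}}$ vanishes on $\Delta(A) \setminus K$, for any $c_{i} \in \mathbb{C}$ and $\varphi_{i} \in \Delta(A) \setminus K$ I would rewrite
\[
\sum_{i=1}^{n} c_{i} \widehat{a}(\varphi_{i}) = \sum_{i=1}^{n} c_{i} (\widehat{a} - \widehat{a_{0}})(\varphi_{i}) = \Bigl(\sum_{i=1}^{n} c_{i} \varphi_{i}\Bigr)(a - a_{0}),
\]
whose modulus is bounded by $\|a - a_{0}\|_{A}\,\bigl\|\sum c_{i}\varphi_{i}\bigr\|_{A^{*}} < \epsilon\,\bigl\|\sum c_{i}\varphi_{i}\bigr\|_{A^{*}}$. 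This is precisely the BED inequality, so $\widehat{a} \in C_{\textup{BSE}}^{0}(\Delta(A))$.

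For (ii), I would combine (i) with Lemma \ref{aden}(v) and the elementary inclusion $C_{\textup{BSE}}^{0}(\Delta(A)) \subseteq C_{0}(\Delta(A))$. The latter follows by specializing the BED definition to a single summand ($n = 1$, $c_{1} = 1$): for every $\epsilon > 0$ there is a compact $K$ with $|\sigma(\varphi)| \leq \epsilon \|\varphi\|_{A^{*}} \leq \epsilon$ for $\varphi \notin K$, since characters on a commutative Banach algebra have norm at most one. With $\widehat{A} \subseteq C_{\textup{BSE}}^{0}(\Delta(A)) \subseteq C_{0}(\Delta(A))$ and $\widehat{A}$ already dense in $C_{0}(\Delta(A))$ by Lemma \ref{aden}(v), the density of $C_{\textup{BSE}}^{0}(\Delta(A))$ in $C_{0}(\Delta(A))$ is immediate. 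I do not anticipate any serious obstacle; the only manoeuvre worth flagging is the repackaging $\sum c_{i}(\widehat{a}-\widehat{a_{0}})(\varphi_{i}) = (\sum c_{i}\varphi_{i})(a-a_{0})$, which reduces the BED estimate to a functional-norm bound and is the same mechanism that drives Theorem \ref{tbouc0}.
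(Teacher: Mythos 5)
Your proposal is correct and takes essentially the same route as the paper: for (i) the paper approximates $a$ by $a_{0}\in A_{0}$, uses that Gelfand transforms with compact support satisfy the BED condition trivially, and concludes via the density of $\widehat{A_{0}}$ in $\widehat{A}$ together with the closedness of $C_{\textup{BSE}}^{0}(\Delta(A))$ (Proposition \ref{pcid}), which is precisely the estimate you write out explicitly in the style of Theorem \ref{tbouc0}. For (ii) your argument (part (i), the inclusion $C_{\textup{BSE}}^{0}(\Delta(A))\subseteq C_{0}(\Delta(A))$, and the density of $\widehat{A}$ in $C_{0}(\Delta(A))$ from Lemma \ref{aden}) is exactly the paper's appeal to Lemma \ref{aden} and part (i).
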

\begin{proof}
(i)
Since  $A_{0}$ is dense in $A$, so $\widehat{A_{0}}$ is dense in $\hat A$. Moreover, $ C_{\textup{BSE}}^{0}(\Delta(A))$ is closed, thus 
$$
\hat A \subseteq  C_{\textup{BSE}}^{0}(\Delta(A)).
$$
(ii) By applying Lemma\ref{aden} and part (i) it is immediate. 
\end{proof}
\begin{cor}
Let $A$ be a commutative semisimple Tauberian Banach algebra. Then\\
(i)
\begin{align*}
C_{0}(X, A\widehat{)}\subseteq C_{BSE}^{0}(\Delta C_{0}(X, A)).
\end{align*}
(ii)
\begin{align*}
{L^{1}(G, A\widehat)}\subseteq  C_{BSE}^{0}(\Delta(L^{1}(G, A))
\end{align*}
\end{cor}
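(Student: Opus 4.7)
The plan is to reduce both statements to Lemma \ref{lt}(i) via the immediately preceding lemma on Tauberian vector-valued algebras. Specifically, Lemma \ref{lt}(i) says that $\widehat{B}\subseteq C_{\textup{BSE}}^{0}(\Delta(B))$ whenever $B$ is commutative Tauberian, so it is enough to check that the algebras $C_{0}(X,A)$ and $L^{1}(G,A)$ fall into this class.

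First, I would note that since $A$ is commutative and semisimple, both $C_{0}(X,A)$ (with pointwise multiplication) and $L^{1}(G,A)$ (with convolution, using that $G$ is abelian or at least that $A$ is commutative so that the integrals commute appropriately) are themselves commutative semisimple Banach algebras, with non-empty character spaces. Next, by the preceding lemma, because $A$ is Tauberian, so are $C_{0}(X,A)$ and $L^{1}(G,A)$.

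Having established the hypotheses, I would then apply Lemma \ref{lt}(i) with $B=C_{0}(X,A)$ to obtain
\[
\widehat{C_{0}(X,A)}\subseteq C_{\textup{BSE}}^{0}(\Delta(C_{0}(X,A))),
\]
which is assertion (i); applying it with $B=L^{1}(G,A)$ yields
\[
\widehat{L^{1}(G,A)}\subseteq C_{\textup{BSE}}^{0}(\Delta(L^{1}(G,A))),
\]
which is assertion (ii).

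There is essentially no serious obstacle here: the corollary is a direct composition of two already-established facts. The only point that warrants care is making sure the chain of hypotheses transfers correctly through the preceding lemma, i.e., that Tauberianness of $A$ really does pass to the vector-valued algebras $C_{0}(X,A)$ and $L^{1}(G,A)$ so that Lemma \ref{lt} is applicable; this has just been recorded in the previous lemma (whose proof is cited from \cite{JT}), so the corollary follows immediately.
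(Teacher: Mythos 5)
Your proposal is correct and is exactly the route the paper intends: the corollary is stated without a separate proof because it follows immediately by combining the preceding lemma (Tauberianness passes from $A$ to $C_{0}(X,A)$ and $L^{1}(G,A)$, via \cite{JT}) with Lemma \ref{lt}(i). Your only addition is the explicit remark that the vector-valued algebras are commutative with non-empty character space, which is harmless and matches the paper's standing assumptions.
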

In the following example, the distinction between ${\textup{BSE}}$ and ${\textup{BED}}$- algebras is specified; see \cite{E7} and \cite{F1}.
\begin{ex}
(i) If $G$ is a non-discrete Abelian group, then $A= (M(G), \star)$ is a  commutative semisimple unital Banach algebra such that $A$ is not a ${\textup{BSE}}$- algebra and  ${\textup{BED}}$-  algebra.\\
(ii) If $A= L^{1}(G)\cap L^{p}(G)$ where $1< p<\infty$ where $G$ is an  infinite compact Abelian group, then $A$ with convolution product and the following norm
$$
\|f\|= max\{\|f\|_{1}, \|f\|_{p}\}
$$
 is a commutative semisimple  Banach algebra, such that $A$  is a ${\textup{BSE}}$- algebra, but is not a ${\textup{BED}}$- algebra.\\
(iii) If $S$ is infinite set, then $A= (l_{1}(S), \|.\|_{1})$ with pointwise multiplication is a Banach algebra, such that  $A$ is not a ${\textup{BSE}}$- algebra but it is a ${\textup{BED}}$-  algebra.\\
(iv) If $X$ is a locally compact Hausdorff space and $A= C_{0}(X)$, then $A$ is a ${\textup{BSE}}$- algebra and ${\textup{BED}}$-  algebra.
\end{ex}


\section{\textup{BSE}- properties  of  tensor product Banach algebra}
Let  $A$ and $B$ be commutative Banach algebra with nonempty character spaces. Then for each cross-norm $\|.\|_{\gamma}$ for which $\epsilon\leq \gamma\leq \pi$,
where $\epsilon$ [resp. $\pi$] is injective [resp. projective] cross-norms on $A\otimes B= <x\otimes y: x\in A, y\in B>$, where $A\otimes B$ is a subalgebra of bilinear Banach algebra $BL(A^{*}, B^{*}, \mathbb C)$
defined by, 
$$
x\otimes y: A^{*}\times B^{*}\to \mathbb C
$$
for which $x\otimes y(f,g):= f(x)g(y)$ for $f\in A^{*}$ and $g\in B^{*}$.
Then $A\otimes_{\gamma}B$ is the completion of $A\otimes B$ under  the cross- norm $\|.\|_{\gamma}$. Clearly, $A\otimes B$ is an algebra under the product 
$$
(\sum_{i=1}^{n}a_{i}\otimes b_{i}).(\sum_{j=1}^{m}c_{j}\otimes d_{j}):= \sum_{i= 1}^{n} \sum_{j=1}^{m}a_{i}c_{j}\otimes b_{i}d_{j}
$$
In general, $A\otimes_{\gamma}B$ need not a Banach algebra; see \cite{VR}. But $A\otimes_{\pi}B:= A\hat \otimes B$ is a Banach algebra and each $u\in A\otimes_{\pi}B$ can represented as $u=\sum_{n=1}^{\infty}a_{n}\otimes b_{n}$
such that $\sum_{n=1}^{\infty}\|a_{n}\|_{A}\|b_{n}\|_{B}< \infty$.  Clearly, $\hat{u}= \sum_{n=1}^{\infty}\hat{a_{n}}\otimes \hat{b_{n}}$ and $\|\hat u\|_{\infty}\leq \| u\|_{\pi}$.
In fact, 
$$
(A\otimes_{\pi} B\hat) = \hat A \otimes_{\pi} \hat B
$$  
In particular, 
$$
(a\otimes b \hat)= \hat a\otimes \hat b.
$$
In general,
if $x\in A\otimes_{\gamma} B$, then $x_{n}\to x$ for some sequence $(x_{n})$ in $A\otimes B$. But 
$$
\|\widehat{x_{n}}-\hat x\|_{\infty}< \|x_{n}- x\|_{\gamma}\to 0
$$
Thus $\hat x= lim \widehat{x_{n}}$. Therefore $(A\otimes_{\gamma} B\widehat) =\hat A\otimes_{\gamma}\hat B$.
Let $A\otimes_{\gamma}B$ be a Banach algebra. Then $\Delta(A\otimes_{\gamma}B)= \Delta(A)\times \Delta(B)$; See\cite{kan}. Let $\varphi\in\Delta(A)$ and $\psi\in\Delta(B)$. Then
$$
\varphi\odot\psi: A\times B\to \mathbb C
$$
defined by
$$
\varphi\odot\psi(a, b):= \varphi(a)\psi(b)
$$
for $a\in A$ and $b\in B$.  
  Also, $C_{0}(X, A)^{*}= M(X, A^{*})$ as Banach spaces under the duality,
$$
U: M(X, A^{*})\to C_{0}(X, A)^{*}
$$
for which $\mu\mapsto U_{\mu}$ and for $g\in C_{0}(X)$ and $a\in A$
$$
U_{\mu}(g\otimes a):= \int_{X}\int_{A^{*}}g(x)f(a)d\mu(x,f)
$$
Also, $L^{1}(G, A)= L^{1}(G)\hat\otimes A$ and  $C_{0}(X, A)= C_{0}(X)\check\otimes A$ are Banach algebra; see\cite{kan}. Furthermore, $L^{1}(G, A)^{*}= L^{\infty}(G, A^{*})$, where $A$ is a
 separable Banach algebra.

Under the  duality 
\begin{align*}
T &: L_{\infty}(G, A^{*})\to L^{1}(G, A)^{*}\\
     & g\mapsto T_{g}
\end{align*}
for all $f\in L^{1}(G, A)$ and $g\in  L_{\infty}(G, A^{*})$. There exist sequence $(g_{n})$ in $L^{\infty}(G)\otimes A^{*}$ such that 
$$
\|g_{n}- g\|_{\epsilon}\to 0
$$
Let $f:= \sum_{m=1}^{\infty}f_{m}\otimes a_{m}$ where $\sum_{m=1}^{\infty}\|f_{m}\|.\|a_{m}\|<\infty$. Then 
\begin{align*}
T_{g}(f) &:= \underset{n}{lim} T_{g_{n}}(f)\\
               &=  \underset{n}{lim}\sum_{m=1}^{\infty}g_{n}(f_{m}\otimes a_{m})
\end{align*}
where 
\begin{align*}
g_{n}(f_{m}\otimes a_{m}) &:= \sum_{k=1}^{l_{n}}h_{k}^{n}\otimes F_{k}^{n}(f_{m}\otimes a_{m})\\
                                               &= \sum_{k=1}^{l_{n}}h_{k}^{n}(f_{m}) F_{k}^{n}(a_{m})
\end{align*}
for  some $h_{k}^{n}\in L^{\infty}(G)$ and $F_{k}^{n}\in A^{*}$.
\begin{lem}
Let $A$ and $B$ be commutative Banach algebras. Then\\
(i) $A\otimes_{\pi} B$ is semisimple if the following two conditions are satisfied.\\
(1) $A$ and $B$ are semisimple.\\
(2) The natural homomorphism  $A\otimes_{\pi}B\to A\otimes B$ is injective.\\
(ii) $A\otimes_{\pi} B$ is unital if and only if both $A$ and $B$ are unital.\\
(iii) $ A\otimes_{\pi} B$ has bounded approximate identity if and only if both $A$ and $B$ have so.
\end{lem}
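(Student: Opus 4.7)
The plan is to handle the three parts in sequence, each reducing to a slicing argument along a character of one factor. Throughout I rely on the facts recorded in the preceding discussion that $\Delta(A \otimes_\pi B) = \Delta(A) \times \Delta(B)$ and that every element of $A \otimes_\pi B$ admits a representation $u = \sum_{n=1}^\infty a_n \otimes b_n$ with $\sum \|a_n\|\|b_n\| < \infty$, together with the universal property of $\otimes_\pi$ which turns any bounded bilinear map into a bounded linear map on the projective tensor product.

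For (i), I would show injectivity of the Gelfand transform $\Gamma$ by factoring it as $\Gamma = \Gamma_0 \circ J$, where $J: A \otimes_\pi B \to A \otimes B$ is the given injective natural homomorphism (into the algebraic tensor product viewed inside $BL(A^*, B^*, \mathbb{C})$) and $\Gamma_0: \sum a_i \otimes b_i \mapsto \sum \hat a_i \otimes \hat b_i$ is the algebraic tensor Gelfand map. If $\Gamma_0(\sum a_i \otimes b_i) = 0$, we may assume the $b_i$ are linearly independent; semisimplicity of $B$ makes the $\hat b_i$ linearly independent in $C_b(\Delta(B))$, so $\sum \varphi(a_i) \hat b_i = 0$ forces $\varphi(a_i) = 0$ for each $i$, and semisimplicity of $A$ then yields $a_i = 0$. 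Hence $\Gamma_0$ is injective and, composed with the injective $J$, delivers semisimplicity of $A \otimes_\pi B$.

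For (ii) and (iii), the ``if'' directions are straightforward: $e_A \otimes e_B$ is clearly a unit, and the product net $(e_\alpha^A \otimes f_\beta^B)_{(\alpha,\beta)}$ is a bounded approximate identity (bounded in projective norm by $\sup \|e_\alpha^A\| \cdot \sup \|f_\beta^B\|$; convergence on elementary tensors extends to $A \otimes_\pi B$ by density and equiboundedness). For the ``only if'' directions I slice: fix $\psi \in \Delta(B)$ and $b_0 \in B$ with $\psi(b_0) = 1$; the universal property extends $\mathrm{id}_A \otimes \psi$ to a bounded algebra homomorphism $P_\psi: A \otimes_\pi B \to A$. If $u$ is the unit, applying $P_\psi$ to $u \cdot (a \otimes b_0) = a \otimes b_0$ yields $P_\psi(u) \cdot a = a$, so $P_\psi(u)$ is a unit of $A$; if $(u_\alpha)$ is a bounded approximate identity, then $(P_\psi(u_\alpha))$ is one for $A$ by the same computation applied to $u_\alpha (a \otimes b_0) \to a \otimes b_0$. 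A symmetric argument using some $\varphi \in \Delta(A)$ gives the analogous statements for $B$.

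The main obstacle is making precise what the ``natural homomorphism $A \otimes_\pi B \to A \otimes B$'' in (i) means, since a single element admits many non-unique series representations; the Gelfand image $\sum \hat a_n \otimes \hat b_n$ is only unambiguously defined once one knows $u$ is determined by its image as a bilinear form, which is exactly what hypothesis (2) encodes. This subtlety should be spelled out before invoking $J$, and the extension of $\mathrm{id}_A \otimes \psi$ to $A \otimes_\pi B$ in (ii) and (iii) should be noted explicitly as an application of the universal property.
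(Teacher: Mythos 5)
Your parts (ii) and (iii) are essentially sound (and, for what it is worth, the paper offers no argument of its own here: it simply cites Theorems 2.11.6, 2.11.7 and 2.8.10 of Kaniuth's book). The one point to tighten there: in (ii) the existence of some $\psi\in\Delta(B)$ is automatic, since a unital $A\otimes_{\pi}B$ has nonempty character space and $\Delta(A\otimes_{\pi}B)=\Delta(A)\times\Delta(B)$; but in (iii) a bounded approximate identity for $A\otimes_{\pi}B$ does not by itself guarantee $\Delta(B)\neq\emptyset$, so your character-based slice $P_{\psi}$ is not available in full generality. You can avoid characters altogether: pick $b\in B$, $g\in B^{*}$ with $g(b)=1$ and use the slice $R_{h}=\mathrm{id}_{A}\otimes h$ with $h:=g(b\,\cdot)$; then $R_{h}(u_{\alpha})\,a=(\mathrm{id}_{A}\otimes g)\bigl(u_{\alpha}(a\otimes b)\bigr)\to g(b)a=a$, and $(R_{h}(u_{\alpha}))$ is bounded.

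Part (i), however, has a genuine gap. The factorization $\Gamma=\Gamma_{0}\circ J$ does not make sense: the natural homomorphism from the completion $A\otimes_{\pi}B$ into $BL(A^{*},B^{*},\mathbb{C})$ does not take values in the algebraic tensor product $A\otimes B$ --- the image of a general $u=\sum_{n=1}^{\infty}a_{n}\otimes b_{n}$ is typically a bilinear form that is not of finite rank --- so $\Gamma_{0}$, which you define and prove injective only on finite sums via the linear-independence trick, cannot be applied to $J(u)$. What your argument actually shows is that the Gelfand map is injective on the dense subalgebra $A\otimes B$, and injectivity on a dense subspace does not transfer to the completion; dealing with the completion is exactly what hypothesis (2) is for. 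The repair is the standard double slicing with the series representation: if $\hat{u}=0$ on $\Delta(A)\times\Delta(B)$, then for each $\varphi\in\Delta(A)$ the element $\sum_{n}\varphi(a_{n})b_{n}\in B$ (absolutely convergent) is annihilated by every $\psi\in\Delta(B)$, hence is $0$ by semisimplicity of $B$; consequently, for every $g\in B^{*}$ the element $\sum_{n}g(b_{n})a_{n}\in A$ is annihilated by every $\varphi\in\Delta(A)$, hence is $0$ by semisimplicity of $A$; therefore the bilinear form $(f,g)\mapsto\sum_{n}f(a_{n})g(b_{n})$ vanishes on all of $A^{*}\times B^{*}$, i.e.\ $u$ lies in the kernel of the natural homomorphism, and condition (2) gives $u=0$. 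Your linear-independence computation can be kept only as the finite-rank special case; it does not replace this step.
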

\begin{proof}
(i) In Theorem 2.11.6 in \cite{kan} was proved.\\
(ii) It was shown that in Theorem  2.11. 7 in \cite{kan}.\\
(iii) By applying Theorem 2. 8. 10 in \cite{kan}, it is proved.
\end{proof}
\begin{DEF}
Let $A$ and $B$ be commutative Banach algebra with nonempty character spaces. Then for all $F\in A^{**}$ and $G\in B^{**}$\\
(i) 
$$F\odot G(\sum_{i=1}^{n}\sum_{j=1}^{m} c_{i,j}F(\varphi_{i}\otimes\psi_{j})) := \sum_{i=1}^{n}\sum_{j=1}^{m} c_{i,j}F(\varphi_{i})G(\psi_{i})$$
where $\varphi_{i}\in \Delta(A)$ and $\psi_{j}\in\Delta(B)$.\\
(ii)
$$
\|\sum_{i=1}^{k}F_{i}\odot G_{i}\|_{\textup{BSE}}:= sup\{|\sum_{i=1}^{k}F_{i}\odot G_{i}(p)|:~ p\in <\Delta(A)\times\Delta(B)>_{1}\}.
$$
(iii)  We denote $A^{**}\overset{\textup{BSE}}{\otimes}B^{**}$ as the complete metric space of $(<A^{**}\odot B^{**}>, \|.\|_{\textup{BSE}})$. In a special case, since $C_{\textup{BSE}}(\Delta(A))= A^{**}|_{\Delta(A)}\cap C_{b}(\Delta(A)$
and $C_{\textup{BSE}}(\Delta(B))= B^{**}|_{\Delta(B)}\cap C_{b}(\Delta(B)$. We define
$$
C_{\textup{BSE}}(\Delta(A))\overset{\textup{BSE}}{\otimes}C_{\textup{BSE}}(\Delta(B)):= \|.\|_{\textup{BSE}}- cl\{<C_{\textup{BSE}}(\Delta(A)){\odot}C_{\textup{BSE}}(\Delta(B))>\}
$$
and
$$
A^{**}\overset{\textup{BSE}}{\otimes}B^{**}:= \|.\|_{\textup{BSE}}- cl\{<A^{**}\odot B^{**}>\}.
$$
$\beta X$ is the Ston- Cech compactification of $X$ and 
$$
\beta X:= w^{*}- cl\{\delta_{x}: x\in X\}\subseteq {C_{b}(X)}^{*}
$$
where 
$\delta_{x}(f)= f(x)$, for $f\in C_{b}(X)$.
\end{DEF}
\begin{lem}
Let $X$ be a completely regular Hausdorff space and $Y$ be a compact Hausdorff space. Then\\
(i)
$$
\beta (X\times Y)= \beta X\times Y
$$
and so
$$
C_{b}(X\times Y)= C_{b}(X)\check\otimes C(Y).
$$  
(ii)
$$
C_{b}(X\times Y)= C(\beta X\times Y).
$$
isometrically isomorphism as Banach algebras.
\end{lem}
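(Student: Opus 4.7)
The plan is to first establish the topological identity $\beta(X\times Y)=\beta X\times Y$, from which both (ii) and the tensor-product decomposition in (i) will follow as formal consequences via the Gelfand correspondence.

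To prove the identity, I would verify that $\beta X\times Y$ itself satisfies the universal property characterizing $\beta(X\times Y)$; uniqueness of the Stone--Cech compactification then forces the two to coincide. Compactness of $\beta X\times Y$ is Tychonoff, density of $X\times Y$ in $\beta X\times Y$ is inherited from density of $X$ in $\beta X$, and the embedding property is inherited from the embedding $X\hookrightarrow\beta X$. So the genuine content is to extend every $f\in C_b(X\times Y)$ to an element of $C(\beta X\times Y)$.

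I would construct this extension fiber-wise: for each $y\in Y$ the slice $f(\cdot,y)\in C_b(X)$ extends uniquely to $\tilde f^y\in C(\beta X)$, and I set $\tilde f(p,y):=\tilde f^y(p)$. The main obstacle is proving joint continuity of $\tilde f$ at a point $(p_0,y_0)$ with $p_0\in\beta X\setminus X$, since the fiber extensions $\tilde f^y$ are a priori linked only through the original $f$. The key auxiliary step is to show that the assignment $\Phi:X\to C(Y)$, $x\mapsto f(x,\cdot)$, is continuous in the sup norm. This uses compactness of $Y$: for $\epsilon>0$ and $x_0\in X$, joint continuity of $f$ at each $(x_0,y)$ yields a product neighborhood $U_y\times V_y$ on which $|f(x,y')-f(x_0,y')|<\epsilon$; a finite subcover of $Y$ by the $V_y$'s then provides a single $X$-neighborhood of $x_0$ controlling the oscillation of $f$ uniformly in $y'\in Y$. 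Equipped with this, the triangle estimate
\[
|\tilde f(p,y)-\tilde f(p_0,y_0)| \le |\tilde f(p,y)-\tilde f(p,y_0)|+|\tilde f(p,y_0)-\tilde f(p_0,y_0)|
\]
reduces joint continuity to continuity of $\tilde f^{y_0}$ on $\beta X$ (second term) and to transferring the modulus of continuity of $\Phi$ from $X$ to $\beta X$ by density (first term).

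For the remaining Banach-algebraic statements I would invoke the Gelfand-type isomorphism $C_b(Z)\cong C(\beta Z)$, valid for every completely regular Hausdorff $Z$, applied with $Z=X\times Y$: combined with the topological identity just proved, this yields (ii) at once. For the tensor factorisation asserted in (i) I would then appeal to the classical identity $C(K_1)\check\otimes C(K_2)\cong C(K_1\times K_2)$ for compact Hausdorff $K_1,K_2$, specialised to $K_1=\beta X$ and $K_2=Y$; coupling this with $C_b(X)\cong C(\beta X)$ and with (ii) delivers $C_b(X\times Y)\cong C_b(X)\check\otimes C(Y)$, completing the proof.
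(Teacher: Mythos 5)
Your organisation of the problem is reasonable, and your auxiliary claim is correct: for $f\in C_b(X\times Y)$ the map $\Phi:X\to C(Y)$, $\Phi(x)=f(x,\cdot)$, is norm continuous by the tube-lemma argument using compactness of $Y$. The genuine gap is the last step, where you dispose of the first term of the triangle estimate by ``transferring the modulus of continuity of $\Phi$ from $X$ to $\beta X$ by density.'' Density does not give this. Since $\tilde f^{\,y}-\tilde f^{\,y_0}$ is the canonical extension of $f(\cdot,y)-f(\cdot,y_0)$, its supremum over any neighbourhood of a point $p_0\in\beta X\setminus X$ is the supremum of $|f(x,y)-f(x,y_0)|$ over a trace of that neighbourhood on $X$, and such traces are never ``small'' subsets of $X$. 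What you actually need is that the family $\{f(x,\cdot):x\in X\}$ be equicontinuous at $y_0$, equivalently that $\Phi(X)$ be relatively norm compact in $C(Y)$; continuity of $\Phi$ at the points of $X$ is a local statement inside $X$ and gives no uniformity near the corona, so the first term is not controlled.

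This is not a repairable slip, because the fibrewise extension is genuinely not jointly continuous in general: the lemma as stated needs extra hypotheses. Take $X=\mathbb{N}$ (discrete), $Y=[0,1]$ and $f(n,y)=\sin(ny)$. If $f$ extended continuously to $\beta\mathbb{N}\times[0,1]$, then $p\mapsto \tilde f(p,\cdot)$ would be a norm-continuous map on the compact space $\beta\mathbb{N}$, forcing $\{\sin(n\,\cdot):n\in\mathbb{N}\}$ to be relatively compact, i.e.\ equicontinuous, in $C[0,1]$, which it is not. Hence $C_b(X\times Y)\neq C(\beta X\times Y)$ and $\beta(X\times Y)\neq\beta X\times Y$ for this pair; by Glicksberg's theorem the identity $\beta(X\times Y)=\beta X\times\beta Y$ holds (for infinite factors) precisely when $X\times Y$ is pseudocompact. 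What is true without further assumptions is $C_b(X)\check\otimes C(Y)=C(\beta X)\check\otimes C(Y)=C(\beta X\times Y)$, which sits isometrically inside $C_b(X\times Y)$ but is in general a proper subalgebra (its elements are exactly those $f$ for which $\Phi(X)$ is relatively norm compact). So your Gelfand and tensor-product reductions are fine; the irreparable point is the joint-continuity step. Note that the paper's own argument defines the same fibrewise extension $f'(z,y)=z(f_y)$ and never verifies joint continuity either, so the obstruction you ran into is intrinsic to the statement, not to your particular route; a correct version requires a hypothesis such as pseudocompactness of $X\times Y$, or must replace $C_b(X\times Y)$ by the subalgebra $C_b(X)\check\otimes C(Y)$.
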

\begin{proof}
(i)
 Since $X\subseteq \beta X$, so $X\times Y \subseteq \beta X\times Y$. Thus $\beta(X\times Y)\subseteq \beta X\times Y$.

   Conversely, if $z\in \beta X$, and $\widehat{x_{r}}\to z$, then $(x_{r}, y\widehat)\to (z, y\widehat)$. In fact, for each $f\in C_{b}(X\times Y)$ we have
$$
(x_{r}, y\widehat)(f)= f(x_{r}, y)= \widehat{x_{r}}(fy)\to z(fy)= (z, y\widehat)(f).
$$
Then $\beta X\times Y\subseteq \beta(X\times Y)$. Therefore
\begin{align*}
C_{b}(X\times Y) &= C(\beta X\times Y)\\
                             &= C(\beta X)\check\otimes C(Y)= C_{b}(X)\check\otimes C(Y).
\end{align*}
(ii)
Define 
\begin{align*}
F: C_{b}(X\times Y) &\to  C(bX\times Y)\\
                                 & f\mapsto f^{'}
\end{align*}
where $f^{'}(z, y)= lim f(x_{r},y)$ such that  $\widehat{x_{r}}\overset{w^{*}}{\to} z$ in $C_{b}(X)^{*}$. Thus $f^{'}(z,y)= z(f_{y})$ where $f_{y}(x)= f(x,y)$. Clearly,  $f_{y}$ is continuous and
$$
\|f^{'}\|_{\infty}\leq \|z\|.\|f_{y}\|\leq \|f\|_{\infty}
$$
Otherwise $\|f\|\geq|f(x_{r},y)|$, so $\|f^{'}\|_{\infty}\geq \|f\|_{\infty}$ and $F$ is isometry. Clearly, $F$  is continuous.

\end{proof}
\begin{lem}
$$
<\Delta(A\otimes_{\gamma}B)>= <\Delta(A)>\odot<\Delta(B)>
$$
\end{lem}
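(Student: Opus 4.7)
The proof proposal for this lemma rests on the fact, cited earlier from Kaniuth, that $\Delta(A\otimes_\gamma B)=\Delta(A)\times\Delta(B)$, where the pair $(\varphi,\psi)$ is identified with the character $\varphi\odot\psi$ defined by $\varphi\odot\psi(a\otimes b)=\varphi(a)\psi(b)$ and extended linearly to $A\otimes B$. Once this identification is in hand, the statement reduces to the observation that $\odot$ is bilinear, and the proof amounts to expanding definitions and checking two inclusions.

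First I would unpack both sides. The left-hand side $\langle\Delta(A\otimes_\gamma B)\rangle$ consists of all finite linear combinations $\sum_{k=1}^{n} c_k\,(\varphi_k\odot\psi_k)$ with $c_k\in\mathbb C$, $\varphi_k\in\Delta(A)$, $\psi_k\in\Delta(B)$. The right-hand side $\langle\Delta(A)\rangle\odot\langle\Delta(B)\rangle$ is the linear span of elements of the form $p\odot q$ where $p=\sum_i\alpha_i\varphi_i\in\langle\Delta(A)\rangle$ and $q=\sum_j\beta_j\psi_j\in\langle\Delta(B)\rangle$.

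For the inclusion $\supseteq$, I would take a generic element $p\odot q$ of the right-hand side and, using the bilinearity of $\odot$ built into the definition in the preceding Definition, expand
\begin{equation*}
p\odot q \;=\; \Bigl(\sum_{i=1}^{m}\alpha_i\varphi_i\Bigr)\odot\Bigl(\sum_{j=1}^{n}\beta_j\psi_j\Bigr) \;=\; \sum_{i=1}^{m}\sum_{j=1}^{n}\alpha_i\beta_j\,(\varphi_i\odot\psi_j).
\end{equation*}
Since each $\varphi_i\odot\psi_j$ lies in $\Delta(A\otimes_\gamma B)$ by the Kaniuth identification, this element belongs to $\langle\Delta(A\otimes_\gamma B)\rangle$. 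Extending by linearity handles arbitrary finite sums of such products.

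Conversely, for $\subseteq$, any element of $\langle\Delta(A\otimes_\gamma B)\rangle$ already has the form $\sum_{k=1}^{n}c_k(\varphi_k\odot\psi_k)$ with $\varphi_k\in\Delta(A)\subseteq\langle\Delta(A)\rangle$ and $\psi_k\in\Delta(B)\subseteq\langle\Delta(B)\rangle$, so each summand $c_k(\varphi_k\odot\psi_k)=(c_k\varphi_k)\odot\psi_k$ is a pure product in $\langle\Delta(A)\rangle\odot\langle\Delta(B)\rangle$, and the sum lies in the linear span. Since the only nontrivial input is the factorization $\Delta(A\otimes_\gamma B)=\{\varphi\odot\psi:\varphi\in\Delta(A),\,\psi\in\Delta(B)\}$, there is no real obstacle here; the mildest thing to watch is notational: making sure the $\odot$ on the left (applied to individual characters) and the $\odot$ on the right (taken between linear combinations of characters) agree, which follows because the definition of $F\odot G$ in the preceding Definition is exactly the bilinear extension.
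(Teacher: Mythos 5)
Your proof is correct and follows essentially the same route as the paper: identify $\Delta(A\otimes_{\gamma}B)$ with $\Delta(A)\times\Delta(B)$ via $\varphi\odot\psi$ and verify both inclusions using the bilinearity of $\odot$. Your version is slightly more explicit than the paper's (which simply notes that $<\Delta(A)>\odot<\Delta(B)>$ is a vector space containing $\Delta(A)\times\Delta(B)$ and that $p\odot q$ expands into characters), but the substance is identical.
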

\begin{proof}
If $V:= <\Delta(A)>\odot<\Delta(B)>$, then $V$ is a vector space. Thus $\Delta(A)\times \Delta(B)\subseteq V$, and so
$<\Delta(A\otimes_{\gamma}B)>\subseteq V$.

  Conversely, $p\odot q= p\otimes q$ on $A\times B$, so $V\subseteq <\Delta(A\otimes_{\gamma}B)>$. This completes the
proof.
\end{proof}
Furthermore, $\varphi\odot\psi= \varphi\otimes \psi|_{A\otimes B}$, where $\varphi\otimes\psi: A^{**}\times B^{**}\to\mathbb C$ defined by $\varphi\otimes \psi(F, G)= F(\varphi)\psi(G)$, for all $F\in A^{**}$
and $G\in B^{**}$. Thus
$$
\varphi\odot\psi(a\otimes b)= \varphi\otimes\psi(\hat a\otimes\hat b)
$$
for all $a\in A$ and $b\in B$.
  Clearly, $\Delta(A)\subseteq C_{\textup{BSE}}(\Delta(A))^{*}$, by embeding, $\varphi\mapsto\hat\varphi$ where $\hat{\varphi}(\sigma)= \sigma(\varphi)$ for all $\sigma\in  C_{\textup{BSE}}(\Delta(A))$.
Let $\sigma_{1}\in  C_{\textup{BSE}}(\Delta(A))$ and $\sigma_{2}\in  C_{\textup{BSE}}(\Delta(B))$. Then 
$$
\sigma_{1}\otimes \sigma_{2}:  C_{\textup{BSE}}(\Delta(A))^{*}\times  C_{\textup{BSE}}(\Delta(B))^{*}\to\mathbb C
$$
defined by $\sigma_{1}\otimes \sigma_{2}(F, G)= F(\sigma_{1})G(\sigma_{2})$. We denote  $\sigma_{1}\odot \sigma_{2}:= \sigma_{1}\otimes \sigma_{2}|_{\Delta(A)\times \Delta(B)}$.
The linear space generated by $\sigma_{1}\odot \sigma_{2}$ is denoted by $C_{\textup{BSE}}(\Delta(A))\odot C_{\textup{BSE}}(\Delta(B))$.

   If there exists a sequence $(\sigma_{n})$ in $C_{\textup{BSE}}(\Delta(A))\odot C_{\textup{BSE}}(\Delta(B))$ such that $\sigma_{n}\to \sigma$. Then 
$$
\sigma\in C_{\textup{BSE}}(\Delta(A))\overset{{\textup{BSE}}}{\odot} C_{\textup{BSE}}(\Delta(B))
$$
If $p\in <\Delta(A)>$ and $q\in <\Delta(B)>$, then
$$
\|(p,q)\|= \|p\|_{A^{*}}+\|q\|_{B^{*}}
$$
Thus,
$\sigma\in C_{\textup{BSE}}(\Delta(A\hat\otimes B))$ if and only if for all $\epsilon>0$ there exists $\delta>0$ where if
$$
\|\sum_{i=1}^{n}c_{i}\varphi_{i}\|_{A^{*}}+ \|\sum_{i=1}^{m}d_{i}\psi_{i}\|_{B^{*}}< \delta
$$
then
$$
|\sum_{i=1}^{n}\sum_{j=1}^{m}c_{i}d_{j}\sigma(\varphi_{i}\otimes \psi_{j})|<\epsilon
$$
\begin{thm}
Let $A$ and $B$ be commutative Banach algebra with nonempty character spaces. Then
$$
 C_{\textup{BSE}}(\Delta(A))\overset{{\textup{BSE}}}{\otimes}C_{\textup{BSE}}(\Delta(B))=  C_{\textup{BSE}}(\Delta(A\otimes_{\gamma}B)).
$$
\end{thm}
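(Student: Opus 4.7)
The plan is to establish the equality by verifying the two inclusions separately; both sides sit inside the ambient space of BSE-type functions on $\Delta(A\otimes_{\gamma}B)=\Delta(A)\times\Delta(B)$ and carry the same $\|\cdot\|_{\textup{BSE}}$-norm, because the preceding lemma gives $\langle\Delta(A\otimes_{\gamma}B)\rangle=\langle\Delta(A)\rangle\odot\langle\Delta(B)\rangle$, so the test vectors entering the two BSE-norms coincide.

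For the containment ``$\subseteq$'', the essential step is the norm estimate
\[
\|\sigma_{1}\odot\sigma_{2}\|_{\textup{BSE}}\;\leq\;\|\sigma_{1}\|_{\textup{BSE}}\,\|\sigma_{2}\|_{\textup{BSE}}
\]
for $\sigma_{1}\in C_{\textup{BSE}}(\Delta(A))$ and $\sigma_{2}\in C_{\textup{BSE}}(\Delta(B))$; this places each elementary tensor, hence their finite linear combinations and the BSE-closure thereof, inside the complete space $C_{\textup{BSE}}(\Delta(A\otimes_{\gamma}B))$. I would verify this bound by iterating the BSE inequality: for $p=\sum_{i,j}c_{ij}\,\varphi_{i}\otimes\psi_{j}$ and any $a\in A$ with $\|a\|_{A}\leq 1$, the $B$-functional $b\mapsto p(a\otimes b)=\sum_{j}\bigl(\sum_{i}c_{ij}\varphi_{i}(a)\bigr)\psi_{j}(b)$ has $B^{*}$-norm at most $\|p\|_{(A\otimes_{\gamma}B)^{*}}$, since $\|a\otimes b\|_{\gamma}\leq 1$. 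Applying the BSE inequality of $\sigma_{2}$ yields $|\sum_{i,j}c_{ij}\varphi_{i}(a)\sigma_{2}(\psi_{j})|\leq\|\sigma_{2}\|_{\textup{BSE}}\|p\|_{(A\otimes_{\gamma}B)^{*}}$; taking $\sup_{\|a\|_{A}\leq 1}$ bounds the $A^{*}$-norm of $\sum_{i}\bigl(\sum_{j}c_{ij}\sigma_{2}(\psi_{j})\bigr)\varphi_{i}$ by the same quantity, and a second application to $\sigma_{1}$ closes out the estimate.

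For the reverse containment, I would argue by contradiction. Given $\sigma\in C_{\textup{BSE}}(\Delta(A\otimes_{\gamma}B))$ allegedly missing from the BSE-closure of $\langle C_{\textup{BSE}}(\Delta(A))\odot C_{\textup{BSE}}(\Delta(B))\rangle$, Hahn--Banach inside the Banach space $(C_{\textup{BSE}}(\Delta(A\otimes_{\gamma}B)),\|\cdot\|_{\textup{BSE}})$ produces a continuous functional $\mu$ annihilating every $\sigma_{1}\odot\sigma_{2}$ while $\mu(\sigma)\neq 0$. Specializing to the elementary tensors $\widehat{a}\odot\widehat{b}=\widehat{a\otimes b}$ for $a\in A$ and $b\in B$ forces $\mu$ to vanish on the whole Gelfand image of $A\otimes_{\gamma}B$. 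Invoking Lemma~\ref{aden}(v) (density of Gelfand transforms in $C_{0}(\Delta(\cdot))$) together with the identification of $\|\cdot\|_{\textup{BSE}}$-continuous functionals with evaluations at elements of $V(A\otimes_{\gamma}B):=\overline{\langle\Delta(A)\times\Delta(B)\rangle}^{(A\otimes_{\gamma}B)^{*}}$ would then propagate the vanishing of $\mu$ to all of $C_{\textup{BSE}}(\Delta(A\otimes_{\gamma}B))$, contradicting $\mu(\sigma)\neq 0$.

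The hardest step is precisely this identification of the $\|\cdot\|_{\textup{BSE}}$-dual: a priori a continuous functional on $C_{\textup{BSE}}(\Delta(A\otimes_{\gamma}B))$ need not be realized as evaluation at a single $p\in V(A\otimes_{\gamma}B)$, so the separation argument requires either a careful weak-$\ast$/density manipulation inside $V(A\otimes_{\gamma}B)^{*}$, or the use of additional structure (semisimplicity or Tauberian/approximate-identity hypotheses in the spirit of Lemma~\ref{lt}) to convert the vanishing on all Gelfand transforms $\widehat{a}\odot\widehat{b}$ into vanishing of $\mu$ on all BSE-functions on $\Delta(A)\times\Delta(B)$.
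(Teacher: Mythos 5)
Your forward inclusion is sound and in fact does more than the paper: the submultiplicativity $\|\sigma_{1}\odot\sigma_{2}\|_{\textup{BSE}}\leq\|\sigma_{1}\|_{\textup{BSE}}\|\sigma_{2}\|_{\textup{BSE}}$ is merely asserted in the paper, whereas your iterated BSE-inequality argument (freeze $a$ with $\|a\|_{A}\leq 1$, apply the inequality for $\sigma_{2}$ using that $\gamma$ is a cross-norm, take the supremum over $a$, then apply the inequality for $\sigma_{1}$) is a correct proof of it.

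The reverse inclusion, however, has a genuine gap, and it is exactly the one you flag at the end. Your Hahn--Banach separation argument produces a $\|\cdot\|_{\textup{BSE}}$-continuous functional $\mu$ vanishing on all $\sigma_{1}\odot\sigma_{2}$, hence on $\widehat{a}\odot\widehat{b}=\widehat{a\otimes b}$, i.e.\ on the Gelfand image of $A\otimes_{\gamma}B$; but vanishing there does not propagate to all of $C_{\textup{BSE}}(\Delta(A\otimes_{\gamma}B))$, because the Gelfand image is in general \emph{not} $\|\cdot\|_{\textup{BSE}}$-dense in the BSE-algebra, and the BSE-dual is not exhausted by (limits of) evaluations at elements of $\overline{\langle\Delta(A)\times\Delta(B)\rangle}$. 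A concrete obstruction: for $A=L^{1}(G)$ ($G$ nondiscrete abelian) and $B=\mathbb{C}$ one has $C_{\textup{BSE}}(\Delta(A\otimes_{\gamma}B))=\widehat{M(G)}$ with $\|\widehat{\mu}\|_{\textup{BSE}}=\|\mu\|_{M(G)}$, and $\widehat{L^{1}(G)}$ is a proper closed subspace of $\widehat{M(G)}$ in this norm; so there are nonzero BSE-continuous functionals annihilating every Gelfand transform of the algebra. Thus the vanishing of $\mu$ on $\widehat{A\otimes_{\gamma}B}$ is strictly weaker than what you need, and assuming otherwise essentially presupposes the density you are trying to establish. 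What the argument needs instead (and what the paper uses, via Takahasi--Hatori) is the characterization of $C_{\textup{BSE}}$ as the set of \emph{pointwise} limits on $\Delta$ of \emph{bounded} nets of Gelfand transforms: given $\sigma\in C_{\textup{BSE}}(\Delta(A\otimes_{\gamma}B))$ one takes a bounded net $(f_{s})$ in $A\otimes_{\gamma}B$ with $\widehat{f_{s}}\to\sigma$ pointwise, notes $\widehat{f_{s}}\in\widehat{A}\otimes\widehat{B}\subseteq C_{\textup{BSE}}(\Delta(A))\otimes C_{\textup{BSE}}(\Delta(B))$, and then passes to the BSE-closure via the restriction estimate $\|S_{n}-\sigma\|_{\textup{BSE}}\leq\|\sigma_{n}-\sigma\|_{\epsilon}$. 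A norm-dual separation argument cannot see this bounded-pointwise approximation, so your proposal as written does not close; to repair it you must replace the Hahn--Banach step by the bounded-net characterization (or some equivalent weak-$\ast$ compactness device), which is a different mechanism from the one you propose. (For what it is worth, the passage in the paper from pointwise approximation to $\|\cdot\|_{\epsilon}$-approximation is itself terse, but the bounded-net characterization is the indispensable ingredient your outline lacks.)
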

\begin{proof}
If $\sigma_{1}\in C_{\textup{BSE}}(\Delta(A))$
and $\sigma_{2}\in C_{\textup{BSE}}(\Delta(B))$, then 
\begin{align*}
\|\sigma_{1}\otimes\sigma_{2}\|_{\textup{BSE}}\leq \|\sigma_{1}\|_{\textup{BSE}} \|\sigma_{2}\|_{\textup{BSE}}.
\end{align*}
Thus 
$$
 C_{\textup{BSE}}(\Delta(A)){\otimes}C_{\textup{BSE}}(\Delta(B))\subseteq C_{\textup{BSE}}(\Delta(A\otimes_{\gamma}B)).
$$
As a result
$$
 C_{\textup{BSE}}(\Delta(A))\overset{{\textup{BSE}}}{\otimes}C_{\textup{BSE}}(\Delta(B))\subseteq  C_{\textup{BSE}}(\Delta(A\otimes_{\gamma}B)).
$$
Conversely, if $\sigma\in C_{\textup{BSE}}(\Delta(A\otimes_{\gamma}B))$, then there exists some bounded net $f_{s}$ in $A\otimes_{\gamma}B$ such that ${\widehat{f_{s}}}(\varphi\otimes\psi)\to \sigma(\varphi\otimes\psi)$, for each 
$\varphi\in\Delta(A)$ and each $\psi\in \Delta(B)$. If 
$$f=\sum_{i=1}^{n}a_{i}\otimes b_{i}$$
 then 
$${\hat f}= \sum_{i=1}^{n}{\hat a_{i}}\otimes{\hat b_{i}}\in {\widehat A}\otimes {\widehat B}$$
 where
$$ {\widehat A}\otimes {\widehat B}\subseteq  C_{\textup{BSE}}(\Delta(A)){\otimes}C_{\textup{BSE}}(\Delta(B)).$$
 Thus 
$$\sigma\in T_{p}- cl {\widehat A}\otimes {\widehat B}\subseteq C_{\textup{BSE}}(\Delta(A))\check{\otimes}C_{\textup{BSE}}(\Delta(B))$$
If $\|\sigma_{n}-\sigma\|_{\epsilon}\to 0$ in $C_{\textup{BSE}}(\Delta(A)){\otimes}C_{\textup{BSE}}(\Delta(B))$, Set 
$$S_{n}= \sigma_{n}|_{\Delta(A)\times\Delta(B)}.$$
 Thus
$$
\|S_{n}- \sigma\|_{\textup{BSE}}< \|\sigma_{n}-\sigma\|_{\epsilon}\to 0
$$
Then $\sigma\in  C_{\textup{BSE}}(\Delta(A))\overset{{\textup{BSE}}}{\otimes}C_{\textup{BSE}}(\Delta(B))$. This completes the proof.
\end{proof}

\begin{cor}
Let $G$ be a  locally compact Abelian group and $A$  be commutative unital Banach algebra.  Then:
$$
 C_{\textup{BSE}}(\Delta(L^{1}(G, A))) = \widehat{M(G)}\overset{{\textup{BSE}}}{\odot}C_{\textup{BSE}}(\Delta(A)) .
$$
\end{cor}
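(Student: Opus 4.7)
The plan is to specialize the preceding theorem to the pair $(L^{1}(G), A)$ in place of $(A, B)$ and then rewrite the first tensor factor using two classical results about the group algebra of a locally compact abelian group.

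First I would invoke the identification $L^{1}(G,A) = L^{1}(G)\hat\otimes A$ recorded in the preamble of this section, so that we are working in the projective tensor-norm setting $\gamma = \pi$. Both $L^{1}(G)$ and $A$ are commutative Banach algebras with nonempty character spaces: $\Delta(L^{1}(G))$ is identified with the Pontryagin dual $\widehat{G}$, and $\Delta(A)\neq\emptyset$ because $A$ is unital. Hence the hypotheses of the theorem just proved are met, and we obtain
$$
C_{\textup{BSE}}(\Delta(L^{1}(G, A))) \;=\; C_{\textup{BSE}}(\Delta(L^{1}(G))) \;\overset{\textup{BSE}}{\otimes}\; C_{\textup{BSE}}(\Delta(A)).
$$

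Next I would simplify the first factor on the right-hand side. By Eberlein's theorem, recalled in the introduction, $L^{1}(G)$ is a BSE algebra for every locally compact abelian $G$, so $C_{\textup{BSE}}(\Delta(L^{1}(G))) = \widehat{M(L^{1}(G))}$. Wendel's theorem then identifies $M(L^{1}(G))$ isometrically and isomorphically with the measure algebra $M(G)$ acting by convolution, which gives $\widehat{M(L^{1}(G))} = \widehat{M(G)}$ as subalgebras of $C_{b}(\widehat{G})$. Substituting this into the displayed identity yields the conclusion.

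The only step I expect to require real care is a notational one: the theorem delivers the $\|\cdot\|_{\textup{BSE}}$-closure $\overset{\textup{BSE}}{\otimes}$, whereas the statement of the corollary uses $\overset{\textup{BSE}}{\odot}$. By the definition given earlier in the paper the symbol $\odot$ refers to the algebraic linear span, and the $\overset{\textup{BSE}}{\otimes}$-product is exactly its $\|\cdot\|_{\textup{BSE}}$-closure; so the right-hand side of the corollary is either a typographical variant of the closure (in which case the two sides match verbatim) or the dense algebraic span (in which case one additionally observes that the left-hand side is $\|\cdot\|_{\textup{BSE}}$-complete and equals that closure). Either way the identification $\Delta(L^{1}(G)\hat\otimes A) = \widehat{G}\times \Delta(A)$ together with the character pairing $\hat\mu \odot \sigma \,(\chi,\varphi) = \hat\mu(\chi)\sigma(\varphi)$ makes the identification canonical, and the corollary follows.
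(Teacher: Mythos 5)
Your proposal is correct and is exactly the argument the paper intends: the corollary is stated without proof as a direct specialization of the preceding theorem to $L^{1}(G,A)=L^{1}(G)\hat\otimes A$, with the first factor rewritten via the classical Bochner--Schoenberg--Eberlein theorem ($L^{1}(G)$ is a BSE algebra) and Wendel's identification $M(L^{1}(G))\cong M(G)$, so that $C_{\textup{BSE}}(\Delta(L^{1}(G)))=\widehat{M(G)}$. Your remark on the $\overset{\textup{BSE}}{\odot}$ versus $\overset{\textup{BSE}}{\otimes}$ notation correctly resolves what is only a typographical inconsistency in the paper.
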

\begin{cor}
Let $X$  be a locally compact Hausdorff topological space and $A$ be a commutative unital Banach algebra. Then 
$$
C_{\textup{BSE}}(\Delta(C_{0}(X, A)) = C_{b}(X\widehat)\overset{{\textup{BSE}}}{\odot}C_{\textup{BSE}}(\Delta(A)) .
$$
\end{cor}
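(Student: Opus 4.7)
The plan is to obtain this corollary as a direct specialization of the preceding theorem to the pair of algebras $C_0(X)$ and $A$. First I would recall from the preliminary discussion at the beginning of Section 3 that $C_0(X, A) = C_0(X) \check{\otimes} A$; that is, $C_0(X, A)$ is the completion of the algebraic tensor product $C_0(X) \otimes A$ under the injective cross norm $\epsilon$, which trivially satisfies $\epsilon \leq \epsilon \leq \pi$. Both factors are commutative Banach algebras with non-empty character spaces (namely $X$ and $\Delta(A)$), so the hypotheses of the theorem are met.

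Applying the theorem then yields
$$
C_{\textup{BSE}}(\Delta(C_0(X, A))) = C_{\textup{BSE}}(\Delta(C_0(X))) \overset{\textup{BSE}}{\otimes} C_{\textup{BSE}}(\Delta(A)),
$$
so the task reduces to the identification $C_{\textup{BSE}}(\Delta(C_0(X))) = \widehat{C_b(X)}$. For this I would invoke Example (iv), which states that the commutative $C^*$-algebra $C_0(X)$ is a BSE-algebra, whence $C_{\textup{BSE}}(\Delta(C_0(X))) = \widehat{M(C_0(X))}$. Combined with the standard identifications $\Delta(C_0(X)) \cong X$ and $M(C_0(X)) = C_b(X)$ (multiplication by $g \in C_b(X)$ on $C_0(X)$ has Gelfand transform $\widehat{T_g}(\varphi_x) = g(x)$, so $\widehat{M(C_0(X))}$ is just $\widehat{C_b(X)}$ viewed on $X$), this gives precisely the right-hand factor appearing in the corollary.

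The only subtlety I anticipate is notational: the corollary statement uses $\overset{\textup{BSE}}{\odot}$ whereas the theorem uses $\overset{\textup{BSE}}{\otimes}$. Comparing with the definitions preceding the theorem, both symbols denote the $\|\cdot\|_{\textup{BSE}}$-closure of the linear span of elementary tensors $\sigma_1 \odot \sigma_2$, so they coincide; this should be stated explicitly for clarity. With that convention fixed, the corollary is a one-line substitution, and the only substantive ingredient beyond the theorem itself is the well-known BSE property of the commutative $C^*$-algebra $C_0(X)$ together with the description of its multiplier algebra.
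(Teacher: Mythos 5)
Your proposal is correct and matches the paper's intended route: the paper states this corollary without proof as a direct specialization of the preceding theorem to $C_{0}(X,A)=C_{0}(X)\check\otimes A$, exactly as you do, with the identification $C_{\textup{BSE}}(\Delta(C_{0}(X)))=\widehat{M(C_{0}(X))}=\widehat{C_{b}(X)}$ coming from the BSE property of the commutative $C^{*}$-algebra $C_{0}(X)$ (Example (iv)) and $\Delta(C_{0}(X))\cong X$. Your remark that $\overset{\textup{BSE}}{\odot}$ and $\overset{\textup{BSE}}{\otimes}$ denote the same $\|\cdot\|_{\textup{BSE}}$-closure of the span of elementary tensors is a sensible clarification consistent with the paper's definitions.
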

\begin{lem}
Let $A$ and $B$ be commutative Banach algebra and $A\otimes_{\gamma}B$ be a  Banach algebra, where $\epsilon\leq\gamma\leq\pi$.
Then  \\
(i)
\begin{align*}
C_{\textup{BSE}}( \Delta(A\check\otimes B)) &\subseteq C_{ \textup{BSE}}( \Delta(A\otimes_{\gamma} B))\\
                                                                           &\subseteq C_{\textup{BSE}}( \Delta(A\hat\otimes B)).
\end{align*}
(ii)
\begin{align*}
C_{\textup{BSE}}^{0}( \Delta(A\check\otimes B)) &\subseteq C_{ \textup{BSE}}^{0}( \Delta(A\otimes_{\gamma} B)) \subseteq C_{\textup{BSE}}^{0}( \Delta(A\hat\otimes B)).
\end{align*}
\end{lem}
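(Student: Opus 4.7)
The plan is to reduce both (i) and (ii) to a single dual-norm comparison on the algebraic tensor product $A\otimes B$. Since each of $A\check\otimes B$, $A\otimes_{\gamma}B$, and $A\hat\otimes B$ is by assumption a Banach algebra, all three share the character space $\Delta(A)\times\Delta(B)$ (already invoked above from \cite{kan}); so the three $C_{\textup{BSE}}$ (respectively $C_{\textup{BSE}}^{0}$) spaces consist of bounded continuous functions on the same underlying set, and the three defining inequalities differ only in the right-hand dual norm.

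The central step is the dual-norm comparison for finite combinations of characters. For $\Phi=\sum_{i=1}^{n}c_{i}(\varphi_{i}\otimes\psi_{i})$, density of the algebraic tensor product $A\otimes B$ in each completion gives
\[
\|\Phi\|_{(A\otimes_{\gamma}B)^{*}} \;=\; \sup\{|\Phi(x)|:\,x\in A\otimes B,\ \|x\|_{\gamma}\leq 1\}.
\]
The cross-norm chain $\|\cdot\|_{\epsilon}\leq\|\cdot\|_{\gamma}\leq\|\cdot\|_{\pi}$ on $A\otimes B$ translates, by nesting of the corresponding unit balls, into a monotone chain of the three dual norms of $\Phi$ in the opposite direction.

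For (i), I would take $\sigma$ in the appropriate $C_{\textup{BSE}}$ space with constant $\beta$, write out its defining inequality, and then splice in the dual-norm chain above to re-express the right-hand side with the neighbouring dual norm, obtaining $\sigma$ in the next $C_{\textup{BSE}}$ space along the chain with the same constant $\beta$; iterating over the chain $\epsilon,\gamma,\pi$ gives the claimed inclusions. For (ii) the same manoeuvre is applied to the BED inequality: the compact exceptional set $K\subseteq\Delta(A)\times\Delta(B)$ is determined by $\sigma$ and $\varepsilon$ alone, so the \emph{same} $K$ witnesses the BED condition across all three tensor products once the dual-norm chain is applied.

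No genuine obstacle arises; the content is essentially a density-and-duality bookkeeping. The one point that needs explicit verification is that the character $\varphi\otimes\psi$ defines the \emph{same} linear functional on the algebraic tensor product $A\otimes B$ for every admissible $\gamma$, so that the dual-norm comparison applies to the same $\Phi$ throughout. This is immediate from the formula $(\varphi\otimes\psi)(a\otimes b)=\varphi(a)\psi(b)$ extended by linearity, together with the fact that each character is continuous with respect to any cross-norm between $\epsilon$ and $\pi$.
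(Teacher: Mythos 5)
Your reduction to a dual-norm comparison on linear combinations of characters is exactly the device the paper itself uses, and your identification of the relevant norm is the correct one: for $\Phi=\sum_{i=1}^{n}c_{i}\,\varphi_{i}\otimes\psi_{i}$ the norm occurring in the BSE inequality for $A\otimes_{\gamma}B$ is the dual norm $\|\Phi\|_{(A\otimes_{\gamma}B)^{*}}=\sup\{|\Phi(x)|:\ x\in A\otimes B,\ \|x\|_{\gamma}\leq 1\}$. But this is precisely where the argument breaks down. As you yourself note, the chain $\|\cdot\|_{\epsilon}\leq\|\cdot\|_{\gamma}\leq\|\cdot\|_{\pi}$ nests the unit balls of $A\otimes B$ with the $\pi$-ball inside the $\gamma$-ball inside the $\epsilon$-ball, hence
\[
\|\Phi\|_{(A\hat\otimes B)^{*}}\ \leq\ \|\Phi\|_{(A\otimes_{\gamma}B)^{*}}\ \leq\ \|\Phi\|_{(A\check\otimes B)^{*}}.
\]
Consequently a BSE estimate $|\sum_{i}c_{i}\sigma(\varphi_{i}\otimes\psi_{i})|\leq\beta\,\|\Phi\|_{(A\check\otimes B)^{*}}$, which is an estimate against the \emph{largest} of the three norms, cannot be spliced into an estimate against $\|\Phi\|_{(A\otimes_{\gamma}B)^{*}}$; the substitution only works when passing from a smaller dual norm to a larger one. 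Carried out correctly, your argument proves
\[
C_{\textup{BSE}}(\Delta(A\hat\otimes B))\subseteq C_{\textup{BSE}}(\Delta(A\otimes_{\gamma}B))\subseteq C_{\textup{BSE}}(\Delta(A\check\otimes B)),
\]
and the analogous reversed chain for the $C_{\textup{BSE}}^{0}$ spaces (with the same constant $\beta$, and in (ii) the same compact set $K$) --- i.e.\ exactly the opposite of the inclusions in the statement. So the sentence ``iterating over the chain gives the claimed inclusions'' does not follow from your own, correctly stated, direction of the dual-norm chain.

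For comparison, the paper's proof makes the same splice but writes $|\sum_{i}c_{i}\sigma(\varphi_{i}\otimes\psi_{i})|\leq M\|\Phi\|_{\epsilon}\leq M\|\Phi\|_{\gamma}$, i.e.\ it applies the inequality $\epsilon\leq\gamma$ to $\Phi$ as though $\Phi$ were an element of a tensor product carrying those cross-norms, rather than a functional on the completions, whose dual norms satisfy the reversed chain displayed above (the norm induced on $A^{*}\otimes B^{*}$ by $(A\check\otimes B)^{*}$ is the integral norm, not the injective norm). So the defect in your proposal is not a bookkeeping slip you could patch: either the inclusions must be stated in the reverse order, which is what both your argument and the paper's, once the norms are read correctly, actually deliver, or one needs a genuinely different comparison of the dual norms that your outline does not supply. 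A further point taken for granted both by you and by the statement is that $A\check\otimes B$ is a Banach algebra at all; without that, $C_{\textup{BSE}}(\Delta(A\check\otimes B))$ is not even defined.
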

\begin{proof}
(i)
If $\sigma\in C_{\textup{BSE}}( \Delta(A\check\otimes B))$,since 
$$
\Delta(A\check\otimes B)= \Delta(A\otimes_{\gamma}B)= \Delta(A)\times \Delta(B)
$$
so
\begin{align*}
|\sum_{i=1}^{n}c_{i}\sigma(\varphi_{i}\otimes\psi_{i})| \leq M\|\sum_{i=1}^{n}c_{i}(\varphi_{i}\otimes\psi_{i})\|_{\epsilon}\leq  M\|\sum_{i=1}^{n}c_{i}(\varphi_{i}\otimes\psi_{i})\|_{\gamma}
\end{align*}
Thus $\sigma\in C_{\textup{BSE}}( \Delta(A\otimes_{\gamma} B))$. In a similar way, because $\|.\|_{\gamma}<\|.\|_{\pi}$, so 
$$
C_{\textup{BSE}}( \Delta(A\otimes_{\gamma} B))\subseteq C_{\textup{BSE}}( \Delta(A\hat\otimes B)).
$$
(ii)
The proof is similar to part (i).
\end{proof}

\begin{lem}\label{lc02}
Let $A$ and $B$ be commutative Banach algebra with nonempty character spaces  such that $A$ or $B$ is unital and $A\otimes_{\gamma}B$ be a  Banach algebra, where $\epsilon\leq\gamma\leq\pi$.
Then 
$$
C_{\textup{BSE}}^{0}(\Delta(A))\overset{{\textup{BSE}}}{\odot} C_{\textup{BSE}}^{0}(\Delta(B))\subseteq  C_{\textup{BSE}}^{0}(\Delta(A\otimes_{\gamma}B))
$$
\end{lem}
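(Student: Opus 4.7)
The plan is to verify the inclusion in three stages: (1) show every elementary tensor $\sigma_{1}\odot\sigma_{2}$ with $\sigma_{1}\in C_{\textup{BSE}}^{0}(\Delta(A))$ and $\sigma_{2}\in C_{\textup{BSE}}^{0}(\Delta(B))$ lies in $C_{\textup{BSE}}^{0}(\Delta(A\otimes_{\gamma}B))$; (2) extend to finite linear combinations by linearity of the target; and (3) close under $\|\cdot\|_{\textup{BSE}}$-convergence via Proposition \ref{pcid}. By the hypothesis that one of the two algebras is unital I may assume $A$ is unital. Then Lemma \ref{aden}(iv) gives $C_{\textup{BSE}}^{0}(\Delta(A))=C_{\textup{BSE}}(\Delta(A))$ and, crucially, $\Delta(A)$ is compact, which is what will let me build a compact witness set in the product space $\Delta(A\otimes_{\gamma}B)=\Delta(A)\times\Delta(B)$.

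For step (1), fix $\sigma_{1}\in C_{\textup{BSE}}(\Delta(A))$, $\sigma_{2}\in C_{\textup{BSE}}^{0}(\Delta(B))$, and $\epsilon>0$. I apply the BED property of $\sigma_{2}$ with tolerance $\epsilon/\|\sigma_{1}\|_{\textup{BSE}}$ to obtain a compact $K_{2}\subseteq\Delta(B)$, and set $K:=\Delta(A)\times K_{2}$, which is compact since $\Delta(A)$ is. For any characters $\chi_{k}=\varphi_{k}\otimes\psi_{k}\in\Delta(A\otimes_{\gamma}B)\setminus K$ one necessarily has $\psi_{k}\in\Delta(B)\setminus K_{2}$, so writing $d_{k}:=c_{k}\sigma_{1}(\varphi_{k})$, the BED inequality for $\sigma_{2}$ yields $|\sum c_{k}\sigma_{1}(\varphi_{k})\sigma_{2}(\psi_{k})|\leq(\epsilon/\|\sigma_{1}\|_{\textup{BSE}})\,\|\sum d_{k}\psi_{k}\|_{B^{*}}$.

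The decisive estimate, and the step I expect to be the main obstacle to organise cleanly, is the comparison of $\|\sum d_{k}\psi_{k}\|_{B^{*}}$ with the ambient tensor norm $\|\sum c_{k}\chi_{k}\|_{(A\otimes_{\gamma}B)^{*}}$. For each $b\in B$ with $\|b\|_{B}\leq 1$ I view $\sum c_{k}\psi_{k}(b)\varphi_{k}$ as an element of $\langle\Delta(A)\rangle$ and apply $\sigma_{1}$, regarded as an element of $A^{**}$ of norm $\|\sigma_{1}\|_{\textup{BSE}}$, to obtain the bound $|\sum c_{k}\sigma_{1}(\varphi_{k})\psi_{k}(b)|\leq\|\sigma_{1}\|_{\textup{BSE}}\cdot\|\sum c_{k}\psi_{k}(b)\varphi_{k}\|_{A^{*}}$. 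Pairing further with $a\in A$, $\|a\|_{A}\leq 1$, and using $\varphi_{k}(a)\psi_{k}(b)=(\varphi_{k}\otimes\psi_{k})(a\otimes b)=\chi_{k}(a\otimes b)$ together with the cross-norm bound $\|a\otimes b\|_{\gamma}\leq\|a\|\,\|b\|\leq 1$ (available because $\gamma\leq\pi$), I obtain $\|\sum d_{k}\psi_{k}\|_{B^{*}}\leq\|\sigma_{1}\|_{\textup{BSE}}\cdot\|\sum c_{k}\chi_{k}\|_{(A\otimes_{\gamma}B)^{*}}$. Combining this with the previous inequality gives $|\sum c_{k}(\sigma_{1}\odot\sigma_{2})(\chi_{k})|\leq\epsilon\|\sum c_{k}\chi_{k}\|_{(A\otimes_{\gamma}B)^{*}}$, which is precisely the BED condition for $\sigma_{1}\odot\sigma_{2}$ relative to the compact set $K$.

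Step (2) is immediate because $C_{\textup{BSE}}^{0}(\Delta(A\otimes_{\gamma}B))$ is a linear subspace. For step (3) I note that, by the definitions in the paper, the norm $\|\cdot\|_{\textup{BSE}}$ on $\langle C_{\textup{BSE}}^{0}(\Delta(A))\odot C_{\textup{BSE}}^{0}(\Delta(B))\rangle$ is the restriction of the BSE-norm on $C_{\textup{BSE}}(\Delta(A\otimes_{\gamma}B))$, since $\Delta(A\otimes_{\gamma}B)=\Delta(A)\times\Delta(B)$ and the norms on $\langle\Delta(A)\times\Delta(B)\rangle\subseteq(A\otimes_{\gamma}B)^{*}$ coincide. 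Proposition \ref{pcid} then ensures that any $\|\cdot\|_{\textup{BSE}}$-limit of elements of $C_{\textup{BSE}}^{0}(\Delta(A\otimes_{\gamma}B))$ remains in this closed ideal, which finishes the inclusion.
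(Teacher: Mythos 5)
Your proposal is correct and follows essentially the same route as the paper's proof: the paper assumes $B$ is unital, applies the BED estimate to the factor over $A$, and takes the compact witness $K_{1}\times\Delta(B)$, while you symmetrically assume $A$ is unital and apply the BED estimate to the $B$-factor with witness $\Delta(A)\times K_{2}$, then pass to spans and the $\|\cdot\|_{\textup{BSE}}$-closure exactly as the paper does. The duality estimate you work out by pairing with $a\otimes b$ and using $\|a\otimes b\|_{\gamma}\leq\|a\|\,\|b\|$ is precisely the inequality the paper asserts without proof, namely $\|\sum_{i}c_{i}\sigma_{2}(\psi_{i})\varphi_{i}\|_{A^{*}}\leq\|\sigma_{2}\|_{\textup{BSE}}\|\sum_{i}c_{i}\varphi_{i}\otimes\psi_{i}\|_{(A\otimes_{\gamma}B)^{*}}$ (with the roles of the factors interchanged), and the same computation without the compact set gives $\|\sigma_{1}\odot\sigma_{2}\|_{\textup{BSE}}\leq\|\sigma_{1}\|_{\textup{BSE}}\|\sigma_{2}\|_{\textup{BSE}}$, the membership in $C_{\textup{BSE}}(\Delta(A\otimes_{\gamma}B))$ that the paper records explicitly.
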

\begin{proof}
Assume that $B$ is unital, $\sigma_{1}\in C_{\textup{BSE}}^{0}(\Delta(A))$ and $\sigma_{2}\in C_{\textup{BSE}}^{0}(\Delta(B))$. Then
$$\sigma_{1}\odot\sigma_{2}(\varphi\otimes\psi)= \sigma_{1}(\varphi)\sigma_{2}(\psi)$$
 for all $\varphi\in\Delta(A)$ and $\psi\in\Delta(B)$. Since $\sigma_{1}\in C_{\textup{BSE}}^{0}(\Delta(A))$,
so for all $\epsilon>0 $ there exist some compact set $K_{1}\subseteq \Delta(A)$ such that for all $c_{i}\in\mathbb C$ and $\varphi_{i}\in\Delta(A)\backslash K_{1}$ the following is yield:
$$
|\sum_{i=1}^{n}c_{i}\sigma_{1}(\varphi_{i})|<\epsilon\|\sum_{i=1}^{n}c_{i}\varphi_{i}\|_{A^{*}}
$$
Since $B$ is unital, then $\Delta(B)$ is compact. Set $K:= K_{1}\times \Delta(B)$, thus $K\subseteq \Delta(A)\times \Delta(B)$ is compact such that for all $c_{i}\in\mathbb C$ and $\varphi_{i}\otimes\psi_{i}\notin K$ we have
\begin{align*}
|\sum_{i=1}^{n}c_{i}\sigma_{1}\odot\sigma_{2}(\varphi_{i}\otimes\psi_{i})| &= |\sum_{i=1}^{n}c_{i}\sigma_{1}(\varphi_{i})\sigma_{2}(\psi_{i})|\\
                                                                                                                              &\leq \epsilon \|\sum_{i=1}^{n}c_{i} \sigma_{2}(\psi_{i})\varphi_{i}\|_{A^{*}}\\    
                                                                                                                              &\leq \epsilon \|\sigma_{2}\|_{\textup{BSE}} \|\sum_{i=1}^{n}c_{i}\varphi_{i}\otimes\psi_{i}\|_{(A\otimes_{\gamma}B)^{*}}
\end{align*}
Also
$$
\|\sigma_{1}\odot\sigma_{2}\|_{\textup{BSE}}\leq \|\sigma_{1}\|_{\textup{BSE}}\|\sigma_{2}\|_{\textup{BSE}}
$$ 
So $\sigma_{1}\odot\sigma_{2}\in C_{\textup{BSE}}(\Delta(A\otimes_{\gamma}B))$. Therefore $\sigma_{1}\odot\sigma_{2}\in C_{\textup{BSE}}^{0}(\Delta(A\otimes_{\gamma}B))$. As a result,
\begin{align*}
C_{\textup{BSE}}^{0}(\Delta(A))\overset{{\textup{BSE}}}{\odot} C_{\textup{BSE}}^{0}(\Delta(B)) \subseteq C_{\textup{BSE}}^{0}(\Delta(A\otimes_{\gamma}B))
\end{align*}
In a similar way, If $A$ is unital, the above inclusion is established.
\end{proof}
\begin{lem}\label{lc01}
Let $A$ and $B$ be commutative Banach algebra with nonempty character spaces  and $A\otimes_{\gamma}B$ be a  Banach algebra, where $\epsilon\leq\gamma\leq\pi$.
Then 
$$
C_{\textup{BSE}}^{0}(\Delta(A\otimes_{\gamma}B))\subseteq C_{\textup{BSE}}^{0}(\Delta(A))\overset{{\textup{BSE}}}{\odot} C_{\textup{BSE}}^{0}(\Delta(B))
$$
\end{lem}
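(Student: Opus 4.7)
The plan is to bootstrap from the preceding BSE tensor-product theorem and then exploit the vanishing-at-infinity property of $\sigma$ to cut down to the BED ideals. Given $\sigma \in C_{\textup{BSE}}^{0}(\Delta(A \otimes_{\gamma} B))$, since $C_{\textup{BSE}}^{0} \subseteq C_{\textup{BSE}}$, the preceding theorem produces a sequence $\sigma_n = \sum_{i=1}^{k_n} f_i^{(n)} \odot g_i^{(n)}$ in $C_{\textup{BSE}}(\Delta(A)) \odot C_{\textup{BSE}}(\Delta(B))$ with $\|\sigma_n - \sigma\|_{\textup{BSE}} \to 0$; moreover the construction allows one to choose each $f_i^{(n)} \in \widehat{A}$ and $g_i^{(n)} \in \widehat{B}$. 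The task is to replace this by an approximating sequence whose factors lie in $C_{\textup{BSE}}^{0}(\Delta(A))$ and $C_{\textup{BSE}}^{0}(\Delta(B))$.

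The main technical input is a slice-function observation: for every fixed $\psi \in \Delta(B)$, the slice $\sigma_\psi(\varphi) := \sigma(\varphi, \psi)$ belongs to $C_{\textup{BSE}}^{0}(\Delta(A))$, and symmetrically in the other coordinate. The proof is short: given the compact set $K \subseteq \Delta(A) \times \Delta(B)$ witnessing the BED property of $\sigma$ at level $\epsilon$, the set $K_\psi := \{\varphi : (\varphi, \psi) \in K\}$ is compact in $\Delta(A)$. Since $\epsilon \leq \gamma$, the slice map $a \otimes b \mapsto \psi(b)a$ is bounded on $A \otimes_{\gamma} B$ by $\|\psi\|_{B^{*}} \leq 1$, giving
$$\|\sum_{i} c_{i}\, \varphi_i \otimes \psi\|_{(A \otimes_{\gamma} B)^{*}} \leq \|\sum_{i} c_{i}\, \varphi_i\|_{A^{*}}.$$
Substituting this into the BED inequality for $\sigma$ (with all $\psi_i = \psi$ and $\varphi_i \notin K_\psi$) produces the required BED inequality for $\sigma_\psi$.

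To conclude I would use this slice information to refine the approximants $\sigma_n$. The idea is that for any $\epsilon > 0$ the BED property supplies a compact $K_\epsilon \subseteq \Delta(A) \times \Delta(B)$ on which the BSE-mass of $\sigma$ is concentrated; projecting $K_\epsilon$ to its coordinate factors and using the closed-ideal property of $C_{\textup{BSE}}^{0}$ inside $C_{\textup{BSE}}$ (Proposition \ref{pcid}), I would construct small perturbations of $\sigma_n$ in which every $f_i^{(n)}$ and $g_i^{(n)}$ is multiplied by a suitable BED cutoff so that the products lie in $C_{\textup{BSE}}^{0}(\Delta(A)) \odot C_{\textup{BSE}}^{0}(\Delta(B))$ and still converge to $\sigma$ in BSE-norm.

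The hardest point will be producing these BED cutoffs without additional hypotheses such as unitality or a Tauberian condition on $A$ or $B$. I expect to need a Hahn--Banach/duality argument in the spirit of the BED-characterization lemma of Section~2 to absorb the tail of $\sigma$ outside $K_\epsilon$ into a controlled element of $C_{\textup{BSE}}^{0}(\Delta(A)) \overset{\textup{BSE}}{\odot} C_{\textup{BSE}}^{0}(\Delta(B))$, which is the genuine obstacle: converting slice-wise vanishing at infinity into a bona fide tensor decomposition inside the BSE-closure of the BED ideals.
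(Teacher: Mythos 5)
Your argument, as written, stops exactly where the proof has to begin. The slice observation is fine (the slice map $a\otimes b\mapsto \psi(b)a$ is indeed bounded by $1$ for every $\gamma\geq\epsilon$, so each $\sigma_\psi$ lies in $C_{\textup{BSE}}^{0}(\Delta(A))$), but slice-wise membership in the BED ideal does not produce a decomposition of $\sigma$ itself, and the step that would — the construction of the ``BED cutoffs'' and the absorption of the tail of $\sigma$ outside $K_\epsilon$ into $C_{\textup{BSE}}^{0}(\Delta(A))\overset{\textup{BSE}}{\odot}C_{\textup{BSE}}^{0}(\Delta(B))$ — is only announced, and you yourself flag it as ``the genuine obstacle.'' Concretely: without unitality (which is exactly the extra hypothesis Lemma \ref{lc02} needs for the reverse inclusion) or a Tauberian-type assumption, there is no obvious family of functions in $C_{\textup{BSE}}^{0}(\Delta(A))$ and $C_{\textup{BSE}}^{0}(\Delta(B))$ that is bounded in BSE norm, is close to $1$ on the projections of $K_\epsilon$, and whose multiplication against your approximants $\sigma_n$ can be controlled in $\|\cdot\|_{\textup{BSE}}$; the closed-ideal property of Proposition \ref{pcid} only helps once such cutoffs exist. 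A further soft spot is your claim that the factors $f_i^{(n)},g_i^{(n)}$ of the BSE-norm approximants can be taken in $\widehat{A}$, $\widehat{B}$: in the preceding theorem the approximation by $\widehat{A}\otimes\widehat{B}$ is only in the topology of pointwise convergence on $\Delta(A)\times\Delta(B)$, while the norm approximants come from $C_{\textup{BSE}}(\Delta(A))\otimes C_{\textup{BSE}}(\Delta(B))$, so even that starting point needs justification.

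For comparison, the paper does not bootstrap from the BSE theorem at all. It uses that every BED function vanishes at infinity, so $\sigma\in C_{0}(\Delta(A)\times\Delta(B))=C_{0}(\Delta(A))\check\otimes C_{0}(\Delta(B))$, and from this injective-tensor factorization it extracts a sequence of finite sums of elementary tensors (with factors vanishing at infinity, viewed as BED functions of the respective algebras) converging to $\sigma$ in sup norm; these are then restricted to $\Delta(A)\times\Delta(B)$ and the sup-norm convergence is converted into the required $\|\cdot\|_{\textup{BSE}}$-convergence. In other words, the tensor decomposition that your plan leaves open is obtained there from the classical $C_{0}$-tensor identity rather than from cutoff multipliers, so the missing step in your proposal is precisely the content of the paper's argument and would need to be supplied (or replaced by a complete cutoff construction) before your proof can be considered complete.
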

\begin{proof}
Assume that $\sigma\in C_{\textup{BSE}}^{0}(\Delta(A\otimes_{\gamma}B)) $. Thus 
\begin{align*}
\sigma\in C_{0}(\Delta(A\otimes_{\gamma}B) &= C_{0}(\Delta(A)\times \Delta(B))\\
                                                                           &= C_{0}(\Delta(A))\check\otimes C_{0}(\Delta(B))\\
                                                                           &= C_{0}(\Delta(A)\widehat)\check\otimes C_{0}(\Delta(B)\widehat)\\
                                                                           &= C_{\textup{BSE}}^{0}(\Delta(C_{0}(\Delta(A)) \check\otimes C_{\textup{BSE}}^{0}(\Delta(C_{0}(\Delta(B))\\
                                                                           &= C_{\textup{BSE}}^{0}(\Delta(A)) \check\otimes C_{\textup{BSE}}^{0}(\Delta(B))
\end{align*}
Then there exists some sequence $(\sigma_{n})$ in $C_{\textup{BSE}}^{0}(\Delta(A)) \otimes C_{\textup{BSE}}^{0}(\Delta(B))$, where $\sigma_{n}\overset{\|.\|_{\epsilon}}{\to}\sigma$.
 Assume that $\sigma_{n}= \sum_{k=1}^{l_{n}}\sigma_{k}^{n}\otimes \eta_{k}^{n}$, where $\sigma_{k}^{n}\in  C_{\textup{BSE}}^{0}(\Delta(A)) $ and $\eta_{k}^{n}\in  C_{\textup{BSE}}^{0}(\Delta(B))$.
If $ S_{n}= \sum_{k=1}^{p_{n}} \mu_{k}^{n}\odot \nu_{k}^{n}= \sigma_{n}|_{\Delta(A)\times\Delta(B)}$, then
\begin{align*}
 \sum_{k=1}^{l_{n}}\sigma_{k}^{n}\otimes \eta_{k}^{n}(\widehat{\varphi}, \widehat{\psi})  &:=  \sum_{k=1}^{l_{n}}\widehat{\varphi}(\sigma_{k}^{n}) \widehat{\psi}(\eta_{k}^{n})\\
                                                                                                                                                          &:= \sum_{k=1}^{l_{n}} \sigma_{k}^{n}(\varphi)\eta_{k}^{n}(\psi)\\
                                                                                                                                                          &:= \sum_{k=1}^{l_{n}}\mu_{k}^{n}(\varphi)\nu_{k}^{n}(\psi)\\
                                                                                                                                                          &= S_{n}(\varphi, \psi)
\end{align*}
 It is clear that ${\widehat\varphi}\in {C_{\textup{BSE}}^{0}(\Delta(A)) }^{*}$ and 
${\widehat\psi}\in {C_{\textup{BSE}}^{0}(\Delta(B))}^{*}$. Therefore
$$
 S_{n}= \sum_{k=1}^{l_{n}} \mu_{k}^{n}\odot \nu_{k}^{n}\in <C_{\textup{BSE}}^{0}(\Delta(A)) \odot C_{\textup{BSE}}^{0}(\Delta(B))>
$$
where $S_{n}(\varphi\otimes\psi)= \sigma_{n}({\widehat\varphi},{\widehat\psi})$, for all $\varphi\in \Delta(A)$, $\psi\in\Delta(B)$ and $n\in\mathbb N$. The following is yield:
\begin{align*}
\sigma:= \underset{n}{lim}\sigma_{n}= \underset{n}{lim}(\sum_{k=1}^{l_{n}}\sigma_{k}^{n}\otimes \eta_{k}^{n})
\end{align*}
At a result
\begin{align*}
\sigma(\varphi\otimes\psi) = \sigma({\widehat\varphi},{\widehat\psi}) &= \underset{n}{lim}\sigma_{n}({\widehat\varphi},{\widehat\psi})\\
                                                                    &= \underset{n}{lim}  \sum_{k=1}^{l_{n}} \mu_{k}^{n}\odot \nu_{k}^{n}(\varphi\otimes\psi)=  \underset{n}{lim} S_{n}(\varphi,\psi)
\end{align*}

and 
$$
\|S_{n}- \sigma\|_{\textup{BSE}}\leq\|\sigma_{n}- \sigma\|_{\epsilon}\to 0
$$
At a result 
$$
\sigma\in  C_{\textup{BSE}}^{0}(\Delta(A))\overset{{\textup{BSE}}}{\odot} C_{\textup{BSE}}^{0}(\Delta(B))
$$
Therefore 
$$
C_{\textup{BSE}}^{0}(\Delta(A\otimes_{\gamma}B))\subseteq C_{\textup{BSE}}^{0}(\Delta(A))\overset{{\textup{BSE}}}{\odot} C_{\textup{BSE}}^{0}(\Delta(B)).
$$
\end{proof}

\begin{thm}
Let $A$ and $B$ be commutative Banach algebra with nonempty character spaces such that $A$ or $B$ is unital and $A\otimes_{\gamma}B$ be a  Banach algebra, where $\epsilon\leq\gamma\leq\pi$.
Then 
$$
C_{\textup{BSE}}^{0}(\Delta(A))\overset{{\textup{BSE}}}{\odot} C_{\textup{BSE}}^{0}(\Delta(B)) =  C_{\textup{BSE}}^{0}(\Delta(A\otimes_{\gamma}B))
$$
\end{thm}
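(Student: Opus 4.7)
The plan is to obtain the equality by combining the two inclusions that were already established separately in Lemma \ref{lc02} and Lemma \ref{lc01}, which between them exhaust both directions. Since one of the lemmas requires the unitality hypothesis on $A$ or $B$, while the other does not, the combined hypothesis of the theorem matches exactly what is needed.

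First, I would apply Lemma \ref{lc02}: assuming without loss of generality that $B$ is unital (so that $\Delta(B)$ is compact), this lemma gives directly the inclusion
\begin{equation*}
C_{\textup{BSE}}^{0}(\Delta(A))\overset{{\textup{BSE}}}{\odot} C_{\textup{BSE}}^{0}(\Delta(B))\subseteq C_{\textup{BSE}}^{0}(\Delta(A\otimes_{\gamma}B)).
\end{equation*}
The compactness of $\Delta(B)$ is what allows one to take a product compact set $K_{1}\times \Delta(B)$ in $\Delta(A)\times\Delta(B)=\Delta(A\otimes_{\gamma}B)$ that absorbs the escape-to-infinity behavior in the first coordinate; if $A$ is the unital one, the symmetric argument applies with the roles exchanged.

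Second, I would apply Lemma \ref{lc01} for the reverse inclusion
\begin{equation*}
C_{\textup{BSE}}^{0}(\Delta(A\otimes_{\gamma}B))\subseteq C_{\textup{BSE}}^{0}(\Delta(A))\overset{{\textup{BSE}}}{\odot} C_{\textup{BSE}}^{0}(\Delta(B)),
\end{equation*}
which was proved with no unitality assumption by first embedding the $\textup{BED}$-ideal into $C_{0}(\Delta(A)\times\Delta(B))=C_{0}(\Delta(A))\check\otimes C_{0}(\Delta(B))$, approximating in the $\epsilon$-norm by elementary tensors, and then observing that the restriction to $\Delta(A)\times\Delta(B)$ of such an approximating sequence converges in $\|\cdot\|_{\textup{BSE}}$ since $\|\cdot\|_{\textup{BSE}}\leq\|\cdot\|_{\epsilon}$. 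Putting the two inclusions together yields the claimed equality.

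I do not expect a real obstacle here, since the hard analytic work is already discharged in the two preceding lemmas. The only mild point to be careful about is the symmetry in the "$A$ or $B$ is unital'' hypothesis of Lemma \ref{lc02}: one should explicitly note that the argument for $A$ unital is obtained by interchanging the roles of the two factors in the proof given for $B$ unital, so that either hypothesis suffices. Apart from this bookkeeping remark, the proof reduces to one line: \emph{combine Lemmas \ref{lc02} and \ref{lc01}}.
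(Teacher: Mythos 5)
Your proposal is correct and coincides with the paper's own argument, which also obtains the equality by combining Lemma \ref{lc02} (for the inclusion requiring that $A$ or $B$ be unital) with Lemma \ref{lc01} (for the reverse inclusion, with no unitality assumption). The remark about symmetry in the unitality hypothesis is a reasonable bookkeeping note but adds nothing beyond what the lemmas already state.
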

\begin{proof}
By applying Lemmas \ref{lc01} and \ref{lc02}, complete the proof.
\end{proof}
\begin{cor}\label{ccb0}
Let $X$ be a locally compact Hausdorff topological space and $A$ be a commutative unital Banach algebra. Then 
$$
C_{\textup{BSE}}^{0}(\Delta(C_{0}(X, A))= C_{0}(X\widehat)\overset{{\textup{BSE}}}{\odot}  C_{\textup{BSE}}^{0}(\Delta(A)).
$$ 
\end{cor}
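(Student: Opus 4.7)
The plan is to deduce this corollary as a direct specialization of the preceding theorem, applied with the roles of the two factors played by $C_{0}(X)$ and $A$. First I would recall the identification $C_{0}(X,A) = C_{0}(X)\check\otimes A$ recorded earlier in the paper; since the injective cross-norm $\epsilon$ satisfies $\epsilon\leq\gamma\leq\pi$ with $\gamma=\epsilon$, and $A$ is unital by hypothesis, the standing hypotheses of the theorem are met for the pair $(C_{0}(X),A)$. Applying it verbatim yields
$$
C_{\textup{BSE}}^{0}(\Delta(C_{0}(X,A))) = C_{\textup{BSE}}^{0}(\Delta(C_{0}(X)))\overset{\textup{BSE}}{\odot} C_{\textup{BSE}}^{0}(\Delta(A)).
$$

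The remaining task is therefore to identify the first factor on the right with $C_{0}(X\widehat{)}$. For this I would invoke Example~(iv), where it is asserted that $C_{0}(X)$ is simultaneously a \textup{BSE}- and a \textup{BED}- algebra for any locally compact Hausdorff $X$. The \textup{BED}- property translates directly into
$$
C_{\textup{BSE}}^{0}(\Delta(C_{0}(X))) = \widehat{C_{0}(X)},
$$
and since $\Delta(C_{0}(X))\cong X$ and the Gelfand map on the $C^{*}$-algebra $C_{0}(X)$ is an isometric isomorphism onto $C_{0}(X\widehat{)}$, combining these yields the desired substitution.

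Plugging this identification into the display from the theorem gives exactly
$$
C_{\textup{BSE}}^{0}(\Delta(C_{0}(X,A))) = C_{0}(X\widehat{)}\overset{\textup{BSE}}{\odot} C_{\textup{BSE}}^{0}(\Delta(A)),
$$
which completes the proof.

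I do not expect any serious obstacle: the only nontrivial ingredient beyond the theorem is the \textup{BED}-property of $C_{0}(X)$, which is already quoted, and the standard tensor factorization $C_{0}(X,A)=C_{0}(X)\check\otimes A$. The argument is purely a matter of matching notation and checking that the unitality hypothesis ($A$ unital) permits the theorem to be invoked with $B:=A$ rather than with the $C_{0}(X)$-factor playing the role of the unital side.
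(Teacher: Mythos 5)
Your proposal is correct and is exactly the argument the paper intends (it leaves the corollary without a written proof, as an immediate specialization of the preceding theorem): take the pair $(C_{0}(X),A)$ with $\gamma=\epsilon$, note $C_{0}(X,A)=C_{0}(X)\check\otimes A$ is a Banach algebra and $A$ is unital, and replace $C_{\textup{BSE}}^{0}(\Delta(C_{0}(X)))$ by $\widehat{C_{0}(X)}$ via the \textup{BED}-property of $C_{0}(X)$ quoted in Example (iv). No gaps; the unitality of the $A$-factor indeed suffices since the theorem only requires one of the two factors to be unital.
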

\begin{cor}
Let $G$ be a  locally compact Abelian group and $A$ be a commutative Banach unital algebra. Then 
$$
C_{\textup{BSE}}^{0}(\Delta(L^{1}(G, A)=  L^{1}(G\widehat) \overset{\textup{BSE}}{\otimes} C_{\textup{BSE}}^{0}(\Delta(A)).
$$
\end{cor}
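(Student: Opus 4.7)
The plan is to reduce this to the preceding theorem together with the Bochner--Eberlein--Doss theorem (cited in the introduction) that identifies $L^{1}(G)$ as a $\textup{BED}$-algebra. Since $L^{1}(G,A)$ is the projective tensor product $L^{1}(G)\hat\otimes A$ (as noted earlier in the paper), this fits the hypothesis of the previous theorem with $\gamma=\pi$, provided one of the factors is unital; and by assumption $A$ is unital.

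First I would rewrite $L^{1}(G,A)=L^{1}(G)\hat\otimes A$ and apply the previous theorem with the two factors $L^{1}(G)$ and $A$. Since $A$ is unital, the hypothesis ``$A$ or $B$ is unital'' is satisfied, so I obtain
$$
C_{\textup{BSE}}^{0}(\Delta(L^{1}(G,A)))
= C_{\textup{BSE}}^{0}(\Delta(L^{1}(G)))\overset{\textup{BSE}}{\otimes}C_{\textup{BSE}}^{0}(\Delta(A)).
$$
Next I would invoke the classical Bochner--Eberlein--Doss theorem, recalled in the introduction, which asserts that for a locally compact Abelian group $G$ the group algebra $L^{1}(G)$ is a $\textup{BED}$-algebra, i.e.\ $C_{\textup{BSE}}^{0}(\Delta(L^{1}(G)))=\widehat{L^{1}(G)}=L^{1}(\widehat G)$ under the Gelfand transform, where $\Delta(L^{1}(G))$ is identified with the dual group $\widehat G$.

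Substituting this identification into the displayed equation above yields exactly
$$
C_{\textup{BSE}}^{0}(\Delta(L^{1}(G,A)))=L^{1}(\widehat G)\overset{\textup{BSE}}{\otimes}C_{\textup{BSE}}^{0}(\Delta(A)),
$$
which is the claimed formula. I do not expect any real obstacle here: both ingredients are already in place. The only point to verify carefully is that the $\textup{BSE}$-tensor identification produced by the theorem is compatible with the identification $\widehat{L^{1}(G)}=L^{1}(\widehat G)$, but this is immediate from the definition of the Gelfand transform on $L^{1}(G)$ and the unitality of $A$ (which keeps $\Delta(A)$ compact and makes the compact-set condition in the definition of $C_{\textup{BSE}}^{0}$ on the product space reduce to the $L^{1}(G)$-factor).
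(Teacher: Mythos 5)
Your argument is correct and is exactly the route the paper intends: the corollary is stated without proof as an immediate consequence of the preceding theorem (applied to $L^{1}(G,A)=L^{1}(G)\hat\otimes A$ with the unital factor $A$), combined with the Bochner--Eberlein--Doss theorem identifying $C_{\textup{BSE}}^{0}(\Delta(L^{1}(G)))$ with $\widehat{L^{1}(G)}$. Your extra remark on the compatibility of the identification $\widehat{L^{1}(G)}\cong L^{1}(\widehat G)$ is a reasonable precaution but adds nothing beyond what the paper takes for granted.
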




\end{document}